\DeclareMathOperator{\dist}{dist}
\DeclareMathOperator{\diam}{diam}
\DeclareMathOperator{\round}{round}
\DeclareMathOperator{\id}{id}
\DeclareMathOperator{\co}{co}
\DeclareMathOperator{\Proj}{Proj}
\begin{document}

\newtheorem{oberklasse}{OberKlasse}
\newtheorem{lemma}[oberklasse]{Lemma}
\newtheorem{proposition}[oberklasse]{Proposition}
\newtheorem{theorem}[oberklasse]{Theorem}
\newtheorem{remark}[oberklasse]{Remark}
\newtheorem{corollary}[oberklasse]{Corollary}
\newtheorem{definition}[oberklasse]{Definition}
\newtheorem{example}[oberklasse]{Example}
\newtheorem{observation}[oberklasse]{Observation}
\newtheorem{assumption}{Assumption}
\newcommand{\clconvhull}{\ensuremath{\overline{\co}}}
\newcommand{\R}{\ensuremath{\mathbbm{R}}}
\newcommand{\N}{\ensuremath{\mathbbm{N}}}
\newcommand{\Z}{\ensuremath{\mathbbm{Z}}}
\newcommand{\Q}{\ensuremath{\mathbbm{Q}}}
\newcommand{\X}{\ensuremath{\mathcal{X}}}
\newcommand{\M}{\ensuremath{\mathcal{M}}}
\newcommand{\F}{\ensuremath{\mathcal{F}}}
\newcommand{\V}{\ensuremath{\mathcal{V}}}
\newcommand{\I}{\ensuremath{\mathcal{I}}}
\newcommand{\ES}{\ensuremath{\mathcal{S}}}
\newcommand{\ClSets}{\ensuremath{\mathcal{A}}}
\newcommand{\CpSets}{\ensuremath{\mathcal{C}}}
\newcommand{\CoCpSets}{\ensuremath{\mathcal{CC}}}
\newcommand{\powerset}{\ensuremath{\mathcal{P}}}
\newcommand{\reachable}{\ensuremath{\mathcal{R}}}
\newcommand{\fath}{\boldsymbol{h}}
\newcommand{\fatrho}{\boldsymbol{\rho}}
\newcommand{\fateps}{\boldsymbol{\epsilon}}

\newcommand{\tc}{\textcolor}
\newcommand{\mc}{\mathcal}

\renewcommand{\phi}{\ensuremath{\varphi}}
\renewcommand{\epsilon}{\ensuremath{\varepsilon}}

\title{Semi-implicit Euler schemes for ordinary differential inclusions}
\author{Janosch Rieger\\
Institut f\"ur Mathematik, Universit\"at Frankfurt\\
Postfach 111932, D-60054 Frankfurt a.M., Germany}
\date{\today }
\maketitle

\begin{abstract}
Two semi-implicit Euler schemes for differential inclusions are proposed and analyzed in depth.
An error analysis shows that both semi-implicit schemes inherit 
favorable stability properties from the differential inclusion.
Their performance is considerably better than that of the implicit Euler scheme, 
because instead of implicit inclusions only implicit equations have to be solved
for computing their images. 
In addition, they are more robust with respect to spatial discretization than
the implicit Euler scheme.
\end{abstract}

\noindent {\bf Key words.} Semi-implicit Euler schemes, differential inclusions, initial value problems\\ 
{\bf AMS(MOS) subject classifications.}  65L20, 34A60\\

\section{Introduction}

Consider the ordinary differential inclusion 
\begin{equation}
\label{ODI}
\dot x(t) \in F(t,x(t)),\ t\in[0,T], \quad x(0)=x_0\in\R^d.
\end{equation}
A solution of \eqref{ODI} is an absolutely continuous function $x:[0,T]\rightarrow\R^d$
which satisfies the inclusion almost everywhere. 
An approximation of the set $\ES([0,T],x_0)$ of all solutions of \eqref{ODI} is possible, but often 
too complex to be of practical value. For many problems, however, it suffices to compute the 
reachable sets
\begin{equation*}
\reachable(t,x_0) = \{x(t): x(\cdot)\in\ES([0,T],x_0)\}
\end{equation*}
for $t\in[0,T]$, which are the sets of all states that can be reached at time $t$ by an arbitrary 
solution starting from $x_0$ at time zero. Basic properties of the reachable set regarded as a set-valued 
mapping depending on initial value and end time are well-known (see \cite{Aubin:Cellina:84}). 

\medskip

One of the first attempts to approximate the set of solutions and the reachable set of \eqref{ODI} 
by an Euler-like scheme was presented in \cite{Dontchev:Farkhi:89}. 
A broad overview over classical literature with a focus
on Runge-Kutta schemes is presented in \cite{Lempio:Veliov:98}.
The numerical method proposed 
in \cite{Grammel:03} only uses extremal points of the right-hand side, so that a fully 
discretized scheme is obtained for right-hand sides with finitely many extremal points. 
A detailed analysis of spatial discretization effects is presented in \cite{Beyn:Rieger:07}.
Recently, error estimates for the set-valued Euler scheme were given for differential inclusions 
with state constraints (see \cite{Baier:Chahma:Lempio:07}) and non-convex differential
inclusions (see \cite{Sandberg:08}). First numerical methods for the approximation of 
solution sets of elliptic partial differential inclusions have been proposed in 
\cite{Beyn:Rieger:11:submitted} and \cite{Rieger:11}.

A set-valued implicit Euler scheme has been analyzed in \cite{Beyn:Rieger:10}. It has very good 
analytical properties, and it is based on an implicit function theorem that is given in 
\cite{Beyn:Rieger:10a}. If applied to stiff differential inclusions, it is considerably more efficient than
the explicit Euler scheme, because it senses the correct asymptotic behavior, while the reachable sets 
of the explicit Euler scheme do not only oscillate, but grow exponentially in diameter if the temporal
step-size is not small enough. The construction of the spatial discretization of the implicit Euler scheme, 
however, requires explicit knowledge of the one-sided 
Lipschitz constant and the modulus of continuity of the right-hand side and is very sensitive
to ill-estimated constants. This is the main motivation for the development of the semi-implicit
Euler schemes \eqref{Euler:1} and (\ref{direct:1}, \ref{direct:2}) analyzed in the present paper. 
In addition, their performance is considerably better than that of the implicit Euler scheme, 
because instead of implicit inclusions only implicit equations have to be solved for computing 
their images. 

Both semi-implicit schemes are no Runge-Kutta schemes in the sense of \cite{Lempio:Veliov:98}.

\section{Preliminaries}

Let the power set and the spaces of nonempty compact and nonempty convex and compact subsets of $\R^d$ be denoted by 
$\powerset(\R^d)$, $\CpSets(\R^d)$ and $\CoCpSets(\R^d)$.
The Euclidean norm is denoted by $|\cdot|$, while
$\|A\|:=\sup_{a\in A}|a|$ denotes the maximal norm of the elements of a set $A\in\CpSets(\R^d)$.
For $A,B\in\CpSets(\R^d)$, the one-sided and the symmetric Hausdorff distance are given by
\begin{align*}
\dist(A,B) &:= \sup_{a\in A}\inf_{b\in B}|a-b|\\
\dist_H(A,B) &:= \max\{\dist(A,B),\dist(B,A)\}.
\end{align*}
For $A\in\CpSets(\R^d)$ and $r>0$ denote $B_r(A):=\{x\in\R^d: \dist(x,A)\le r\}$. If $A=\{x\}$ is
a point, $B_r(x)$ is simply the closed ball with radius $r$ centered at $x$.

For a set-valued mapping $F:\R^d\rightarrow\CpSets(\R^d)$ and $A\subset\R^d$ define $F(A):=\cup_{a\in A}F(a)$.
Thus the composition $F\circ G$ of two set-valued mappings $F$ and $G$ is given by
$(F\circ G)(x):=\cup_{y\in G(x)}F(y)$, and $F^k(x):=(F\circ\ldots\circ F)(x)$ is defined by induction.
The projection $\Proj:\R^d\times\CpSets(\R^d)\rightarrow\CpSets(\R^d)$ is defined by
$\Proj(x,A):=\{a\in A: |x-a|\le|x-a'|\ \text{for all}\ a'\in A\}$. If $A\in\CoCpSets(\R^d)$,
then $\Proj(x,A)$ is a singleton.

A set-valued mapping $F:\R^d\rightarrow\CpSets(\R^d)$ is called continuous if it is continuous w.r.t.\ 
the Euclidean metric and the Hausdorff distance and $L$-Lipschitz continuous if
\[\dist_H(F(x),F(x')) \le L|x-x'|\ \text{for all}\ x,x'\in\R^d.\]

The notion of relaxed one-sided Lipschitz set-valued mappings generalizes the concepts of Lipschitz 
continuity and the (strong) one-sided Lipschitz property. A detailed analysis of this property can be found 
in \cite{Donchev:02} and several other works of this author.
\begin{definition}
\label{ROSL}
A mapping $F:\R^m\rightarrow\CoCpSets(\R^d)$ is called relaxed one-sided Lipschitz with constant
$l\in\R$ (or $l$-ROSL) if for every $x,x'\in\R^d$ and $y\in F(x)$ there exists some $y'\in F(x')$ such that
\begin{equation}
\langle y-y',x-x' \rangle \leq l|x-x'|^2.
\end{equation}
\end{definition}
A single-valued function $f:\R^d\rightarrow\R^d$ is $l$-ROSL if and only if it is one-sided Lipschitz with constant 
$l$ (or $l$-OSL) in the classical sense, i.e.\ if it satisfies
\[\langle f(x)-f(x'),x-x'\rangle \le l|x-x'|^2\]
for all $x,x'\in\R^d$.

\begin{lemma}
If $(\Omega,\mathcal{A},\mu)$ is a measurable space with $\mu(\Omega)=1$ and $g:\Omega\rightarrow\R^d$
is a $\mu$-integrable function, then for every $A\in\CoCpSets(\R^d)$, the estimate
\[\dist(\int_\Omega g(\omega)d\mu(\omega),A) \le \int_\Omega \dist(g(\omega),A)d\mu(\omega)\]
holds.
\end{lemma}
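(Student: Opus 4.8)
The plan is to reduce the statement to a scalar convexity fact, namely Jensen's inequality for the function $\delta_A:\R^d\to\R$, $\delta_A(x):=\dist(x,A)$. Before that, one should check that the right-hand side makes sense: fixing any $a_0\in A$, the triangle inequality gives $0\le\delta_A(x)\le|x-a_0|$ and, more precisely, $|\delta_A(x)-\delta_A(x')|\le|x-x'|$, so $\delta_A$ is ($1$-Lipschitz, hence) continuous. Therefore $\omega\mapsto\delta_A(g(\omega))$ is measurable as a composition of a continuous and a measurable map, and it is dominated by the $\mu$-integrable function $\omega\mapsto|g(\omega)-a_0|$, so it is $\mu$-integrable.

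Next I would exploit that $\delta_A$ is convex, which holds precisely because $A$ is convex: for $x_0,x_1\in\R^d$, $\lambda\in[0,1]$ and $\epsilon>0$, choose $a_i\in A$ with $|x_i-a_i|\le\delta_A(x_i)+\epsilon$; then $\lambda a_1+(1-\lambda)a_0\in A$, so $\delta_A(\lambda x_1+(1-\lambda)x_0)\le\lambda|x_1-a_1|+(1-\lambda)|x_0-a_0|\le\lambda\delta_A(x_1)+(1-\lambda)\delta_A(x_0)+\epsilon$, and $\epsilon\downarrow 0$. A convex function on $\R^d$ is the supremum of its affine minorants, so integrating any affine $\ell\le\delta_A$ over the probability space $\Omega$ gives $\ell\bigl(\int_\Omega g\,d\mu\bigr)=\int_\Omega\ell(g(\omega))\,d\mu(\omega)\le\int_\Omega\delta_A(g(\omega))\,d\mu(\omega)$; taking the supremum over all such $\ell$ yields
\[
\dist\Bigl(\int_\Omega g\,d\mu,A\Bigr)=\delta_A\Bigl(\int_\Omega g\,d\mu\Bigr)\le\int_\Omega\dist(g(\omega),A)\,d\mu(\omega),
\]
which is the claim.

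If one prefers to avoid invoking Jensen, there is an essentially equivalent hands-on argument. Put $p:=\Proj\bigl(\int_\Omega g\,d\mu,A\bigr)$, which is a singleton since $A\in\CoCpSets(\R^d)$, set $v:=\int_\Omega g\,d\mu-p$, and note $\int_\Omega(g(\omega)-p)\,d\mu=v$ because $\mu(\Omega)=1$. For each $\omega$ pick $q\in\Proj(g(\omega),A)$ and split $\langle g(\omega)-p,v\rangle=\langle g(\omega)-q,v\rangle+\langle q-p,v\rangle$; the variational inequality characterizing the projection onto the convex set $A$ gives $\langle q-p,v\rangle\le 0$, while Cauchy--Schwarz gives $\langle g(\omega)-q,v\rangle\le\dist(g(\omega),A)\,|v|$. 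Integrating the measurable, integrable function $\omega\mapsto\langle g(\omega)-p,v\rangle$ and using this pointwise bound yields $|v|^2=\langle v,v\rangle\le|v|\int_\Omega\dist(g(\omega),A)\,d\mu$, and dividing by $|v|$ (the case $v=0$ being trivial) finishes the proof. In either route there is no serious obstacle; the only points that deserve a word of care are the measurability and integrability of the integrand on the right, and, in the second version, the degenerate case $\int_\Omega g\,d\mu\in A$.
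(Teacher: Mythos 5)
Your first argument is precisely the paper's proof---apply Jensen's inequality to the convex, Lipschitz continuous function $\dist(\cdot,A)$---with the convexity, measurability and integrability checks written out explicitly. The projection-based alternative you sketch is also correct, but the paper itself stops at the one-line Jensen argument.
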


\begin{proof}
Apply Jensen's inequality to the convex and Lipschitz continuous function $\dist(\cdot,A):\R^d\rightarrow\R$.
\end{proof}

Throughout this paper, it will be assumed that there exists a splitting 
\begin{equation}
\label{splitting}
F(t,x) = f(t,x) + M(t,x),
\end{equation}
where $f:[0,T]\times\R^d\rightarrow\R^d$ and $M:[0,T]\times\R^d\rightarrow\CoCpSets(\R^d)$
satisfy the following properties.
\begin{itemize}
\item [A1)] The function $(t,x) \mapsto f(t,x)$ is continuous and there exists a continuous 
integrable function $l_f:[0,T]\rightarrow\R$ such that the mapping
$x \mapsto f(t,x)$ is OSL with constant $l_f(t)$ for any $t$.
\item [A2)] The mapping $(t,x) \mapsto M(t,x)$ is continuous w.r.t.\ the Hausdorff
metric and there exists a continuous integrable function $L_M:[0,T]\rightarrow\R_+$ such that the 
multimap $x \mapsto M(t,x)$ is Lipschitz with constant $L_M(t)$ for every $t$. 
\end{itemize}
As a consequence, the set-valued mapping $F$ is jointly continuous, and $x\mapsto F(t,x)$ is
$l(t)$-ROSL with $l(t)=l_f(t)+L_M(t)$.

\medskip

In view of the following remark, the above assumptions are not unreasonable.
\begin{remark}
A control system with affine linear controls has the form
\[\dot x(t) = f(t,x(t)) + A(t,x(t))u(t), \quad u(t)\in U\]
with a function $f:[0,T]\times\R^d\rightarrow\R^d$, a matrix-valued mapping 
$A:[0,T]\times\R^d\rightarrow\R^{d \times m}$ and 
a convex and compact control set $U\subset\R^m$.
If $f$ satisfies A1) and $A$ is continuous and $L_A$-Lipschitz in the
second argument, then the right-hand side of the corresponding differential inclusion
\[\dot x(t) \in f(t,x(t)) + A(t,x(t))U\]
exhibits a splitting of type \eqref{splitting}, where $M(t,x) := A(t,x)U$ is jointly 
continuous and $L_A\|U\|$-Lipschitz (and hence $L_A\|U\|$-ROSL) in the second argument.
In \cite[Theorem 4.1]{Lempio:Veliov:98}, Runge-Kutta schemes are analyzed in this setting.
The results there focus on the order of convergence of these schemes and not on
stiffness and asymptotic behavior.

Differential equations with uncertainty are a special case of the above control system
with $m=d$, $A(t,x)=r(t,x)\cdot\id$, where $r:[0,T]\times\R^d\rightarrow\R_+$ is a non-negative
real-valued function, and $U=B_1(0)$.

\medskip

A set-valued semi-implicit Euler scheme has been considered, but not thoroughly analyzed
in \cite{Kloeden:Valero:09}, where attractors of the partial differential inclusion
\[u_t \in \Delta u + F(u)\]
with Dirichlet boundary condition and multivalued nonlinearity $F$ and its Galerkin approximations
\[\frac{d}{dt}u^{(N)} \in \Delta^{(N)} u^{(N)} + F^{(N)}(u^{(N)})\]
are investigated. As all eigenvalues of the discretized Laplace operator $\Delta^{(N)}$ are
negative, the Galerkin differential inclusion exhibits a splitting of the right-hand side
into a OSL component $\Delta^{(N)} u^{(N)}$ with negative Lipschitz constant and a 
Lipschitz continuous nonlinearity $F^{(N)}(u^{(N)})$. Hence
the semi-implicit Euler scheme
\[u_{n+1}^{(N)} \in u_{n}^{(N)} + h\Delta^{(N)}u_{n+1}^{(N)} + hF^{(N)}(u_n^{(N)})\]
is solvable for any step-size $h>0$ with solution
\[u_{n+1}^{(N)} \in (I-h\Delta^{(N)})^{-1}[u_{n}^{(N)} + h F^{(N)}(u_n^{(N)})].\]
\end{remark}

\section{The parameterized semi-implicit Euler scheme}
\label{parametrized:sec}

In this section, the parameterized version of the semi-implicit Euler scheme will be analyzed.
One step of this method is a multivalued mapping $\Phi:D\rightarrow\powerset(\R^d)$ given by
\begin{equation}
\label{Euler:1}
\Phi(t,x;h) = \{z\in\R^d: z \in x + hf(t+h,z) + hM(t,x)\},
\end{equation}
where $D\subset[0,T]\times\R^d\times\R_+$ is a suitable domain of definition (see Section \ref{single:step:properties}).
The scheme $\Phi$ is implicit in the single-valued part $f$ of $F$ and explicit in the set-valued component $M$.
It is called the parameterized semi-implicit Euler scheme, because its solution set can be parameterized over the set $M(t,x)$
as shown in Lemma \ref{representation:lemma}.

Approximations of the reachable sets $\reachable(t,x_0)$, $t\in[0,T]$, of \eqref{ODI} are obtained by iterating the scheme $\Phi$ 
on a temporal grid. Define $\Delta_{\fath}=\{t_0,t_1,\ldots,t_N\}$, where $N\in\N$, $0=t_0<t_1<\ldots<t_N=T$,
$h_n:=t_{n+1}-t_n$ for $n=0,\ldots,N-1$ and $\fath:=(h_0,\ldots,h_{N-1})$. 
Throughout this paper, the following condition will be imposed on $\Delta_{\fath}$.
\begin{itemize}
\item [A3)] For any $n\in\{0,\ldots,N-1\}$, the step-size $h_n$ satisfies $l_f(t_{n+1})h_n<1$.
\end{itemize}
In view of Theorem \ref{properties:Phi}, this assumption guarantees that the iterates of the 
numerical scheme are well-defined.

\begin{definition}
A sequence $\{y_n\}_{n=0}^{N}\subset\R^d$ is called a trajectory of the parameterized 
semi-implicit Euler scheme $\Phi$ associated with \eqref{ODI} if 
\begin{equation}
\label{def:trajectory:1}
y_{n+1} \in \Phi(t_n,y_n;h_n)\ \text{for}\ n=0,\ldots,N-1,\quad y_0=x_0.
\end{equation}
The set of all such trajectories is denoted by $\ES_\Phi(\Delta_{\fath},x_0)$.
\end{definition}

As in continuous time, the reachable set $\reachable_\Phi(t_n,x_0)$
is the set of all states that can be reached at time $t_n$ by a discrete trajectory starting from $x_0$ at time zero.
\begin{definition}
For any $t_n\in\Delta_{\fath}$, the reachable set $\reachable_\Phi$ of the parameterized semi-implicit Euler scheme $\Phi$ is given by
\begin{equation}
\label{def:reachable:1}
\reachable_\Phi(t_n,x_0)=\{y_n: \{y_k\}_{k=0}^N\in\ES_\Phi(\Delta_{\fath},x_0)\}.
\end{equation}
\end{definition}
According to Theorem \ref{properties:Phi}, the reachable set $\reachable_\Phi(t_n,x_0)$
depends continuously on $(\fath,x_0)$ and is Lipschitz continuous in the initial value,
because it is a composition of multimaps with this property.

\medskip

The properties of $\Phi$ regarded 
as a set-valued mapping depending on time, space, and step-size are investigated in 
Section \ref{single:step:properties}, while Section \ref{single:step:dynamics} deals
with the dynamics and convergence of the solution sets. In Section \ref{single:step:discretization},
the spatial discretization of the parameterized semi-implicit Euler scheme is analyzed, which is much easier 
and more robust than that of the fully implicit Euler scheme presented in \cite{Beyn:Rieger:10}.
All estimates in Sections \ref{single:step:dynamics} and \ref{single:step:discretization}
are given in terms of the solution sets, because these imply the same estimates for the reachable
sets.

\subsection{Properties of the scheme as a multivalued mapping}
\label{single:step:properties}

Define $D:=\{(t,x,h)\in[0,T]\times\R^d\times\R_+: l_f(t+h)h<1\}$.
\begin{lemma}
\label{D:open}
The set $D$ is relatively open in $[0,T]\times\R^d\times\R_+$.
\end{lemma}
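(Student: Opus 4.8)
The plan is to show that the complement of $D$ in $X := [0,T]\times\R^d\times\R_+$ is relatively closed, equivalently that $D$ is relatively open by exhibiting, around each of its points, a relatively open neighborhood contained in $D$. The key observation is that $D$ is defined by the strict inequality $g(t,x,h) < 1$, where $g(t,x,h) := l_f(t+h)\,h$. Since $g$ does not actually depend on $x$, and $l_f:[0,T]\to\R$ is continuous by assumption A1), the function $(t,h)\mapsto l_f(t+h)h$ is continuous on $\{(t,h)\in[0,T]\times\R_+ : t+h\le T\}$ as a composition and product of continuous maps. Hence $g$ is continuous on $X$ (restricted to the set where $l_f$ is defined, i.e.\ $t+h\le T$; I will address the boundary point below), and $D = g^{-1}((-\infty,1))$ is the preimage of the open set $(-\infty,1)\subset\R$ under a continuous map, therefore relatively open in $X$.

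The one subtlety is the domain of $l_f$: it is only given on $[0,T]$, so $g(t,x,h)$ is literally defined only when $t+h\le T$. There are two clean ways to handle this. First, one may simply regard $D$ as a subset of $\{(t,x,h): t+h\le T\}$ and run the continuity argument there; but then relative openness in the full $X$ needs the points with $t+h = T$, $l_f(t+h)h<1$ to have neighborhoods in $X$ inside $D$, which fails if $l_f(T)>0$ — so this reading is probably not intended. The natural reading (consistent with A3), which evaluates $l_f(t_{n+1})$ with $t_{n+1}\le T$) is that for $(t,x,h)\in D$ one needs $t+h\le T$ implicitly, OR one extends $l_f$ continuously past $T$; in either case the proof is the continuity-of-preimage argument. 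I would state explicitly that $D\subset\{t+h\le T\}$ and that we prove openness relative to this set (or relative to $X$ after noting $l_f$ extends continuously), then invoke continuity of $g$.

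Concretely, the steps are: (i) fix $(t_*,x_*,h_*)\in D$, so $\delta := 1 - l_f(t_*+h_*)h_* > 0$; (ii) use continuity of $s\mapsto l_f(s)$ at $s_* = t_*+h_*$ together with the product structure to find $\rho>0$ such that $|l_f(t+h)h - l_f(t_*+h_*)h_*| < \delta$ whenever $(t,h)$ is within $\rho$ of $(t_*,h_*)$ (and $t+h$ still in the domain of $l_f$); (iii) conclude that the relatively open ball $B_\rho(t_*,x_*,h_*)\cap X$ (or $\cap\{t+h\le T\}$) lies in $D$. I expect no real obstacle here — the entire content is the continuity of $l_f$ from A1) and the triviality that strict inequalities cut out open sets; the only thing requiring care, and worth a sentence in the writeup, is pinning down exactly which ambient set "$D$ is relatively open in" refers to, given that $l_f$ is a priori only defined on $[0,T]$.
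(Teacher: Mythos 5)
Your argument is the same as the paper's: the map $(t',x',h')\mapsto l_f(t'+h')h'$ is continuous, so the strict inequality $l_f(t+h)h<1$ persists on a relatively open ball around any point of $D$, which is exactly the paper's proof. Your side remark about the domain of $l_f$ (i.e.\ whether $t+h\le T$ is implicitly required) flags a genuine imprecision that the paper also glosses over, but it does not affect the correctness of the argument under the natural reading.
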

\begin{proof}
Fix $(t,x,h)\in D$. Since $l_f(t+h)h<1$ and $(t',x',h')\mapsto l_f(t'+h')h'$
is continuous, there exists some $R>0$ such that $l_f(t'+h')h'<1$ for all 
$(t',x',h')\in B_R(t,x,h)\cap([0,T]\times\R^d\times\R_+)$, which
is a relatively open neighborhood of $(t,x,h)$.
\end{proof}

The set-valued mapping $G:\R^d\rightarrow\CoCpSets(\R^d)$ defined by
\[G_{t,x,h}(z) := x+hf(t+h,z)-z+hM(t,x)\]
is continuous and ROSL with constant $l_{G_{t,x,h}}:=-(1-l_f(t+h)h)$. 
The parameterized semi-implicit Euler scheme can be represented as 
\[\Phi(t,x;h) = \{z\in\R^d: z\in x+hf(t+h,z)+hM(t,x)\} = S_{G_{t,x,h}}(0),\]
where $S_{G_{t,x,h}}(0):=\{z\in\R^d:0\in G_{t,x,h}(z)\}$.

\begin{theorem}
\label{properties:Phi}
For any $(t,x,h)\in D$, the set $\Phi(t,x;h)$ is nonempty and compact for each $x\in\R^d$
and satisfies
\begin{equation}
\label{diameter:Phi}
\diam \Phi(t,x;h) \le \frac{h}{1-l_f(t+h)h}\sup_{\xi\in\Phi(t,x;h)}\diam M(t,\xi).
\end{equation}
For any $y\in\R^d$, the scheme satisfies the estimate
\begin{align}
\dist(y,\Phi(t,x;h)) \le \frac{1}{1-l_f(t+h)h} \dist(y,x+hf(t+h,y)+hM(t,x)), \label{defect:estimate:Phi}\\
\dist(\Phi(t,x;h),y) \le \frac{1}{1-l_f(t+h)h} \dist(x+hf(t+h,y)+hM(t,x),y). \label{distance:estimate:Phi}
\end{align}
\end{theorem}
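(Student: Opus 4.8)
The plan is to reduce everything to the structure theory of the map $z\mapsto G_{t,x,h}(z)=x+hf(t+h,z)-z+hM(t,x)$, which the text has already identified as continuous and ROSL with the \emph{negative} constant $l_{G_{t,x,h}}=-(1-l_f(t+h)h)<0$ on $D$. The key observation is that a ROSL map with a strictly negative constant has a single-valued, Lipschitz inverse in the sense of the implicit function theorem of \cite{Beyn:Rieger:10a}, so that the zero-set $S_{G_{t,x,h}}(0)$ is nonempty and compact, and more quantitatively, $\dist(y',S_G(0))$ is controlled by $\dist(0,G(y'))$ with constant $1/|l_G|$, and likewise for the reversed one-sided distance. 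Concretely I would invoke (or re-derive in two lines from the definition of ROSL) the following: if $H:\R^d\rightarrow\CoCpSets(\R^d)$ is ROSL with constant $\lambda<0$, then for all $u,v\in\R^d$ and all $p\in H(u)$ there is $q\in H(v)$ with $\langle p-q,u-v\rangle\le\lambda|u-v|^2$, whence $|u-v|\le\frac{1}{|\lambda|}\bigl(\dist(0,H(u))+\dist(0,H(v))\bigr)$ type estimates follow by Cauchy--Schwarz; and the zero set is a single continuity/compactness argument plus a degree or Browder--fixed-point argument for nonemptiness. I expect the authors cite their own implicit function theorem here, so I would simply cite it.

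For the diameter bound \eqref{diameter:Phi}: take $z_1,z_2\in\Phi(t,x;h)$, so $z_i\in x+hf(t+h,z_i)+hm_i$ for suitable $m_i\in M(t,x)$. Subtract to get $z_1-z_2=h\bigl(f(t+h,z_1)-f(t+h,z_2)\bigr)+h(m_1-m_2)$, take the inner product with $z_1-z_2$, and use the OSL property of $f(t+h,\cdot)$ with constant $l_f(t+h)$ on the first term: $\langle f(t+h,z_1)-f(t+h,z_2),z_1-z_2\rangle\le l_f(t+h)|z_1-z_2|^2$. This yields $(1-l_f(t+h)h)|z_1-z_2|^2\le h\langle m_1-m_2,z_1-z_2\rangle\le h|m_1-m_2|\,|z_1-z_2|$, hence $|z_1-z_2|\le\frac{h}{1-l_f(t+h)h}|m_1-m_2|$. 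Finally $|m_1-m_2|\le\diam M(t,x)$, but since $m_i$ is the "driving" point associated to $z_i$ one can be slightly sharper and bound it by $\sup_{\xi\in\Phi}\diam M(t,\xi)$ if $M$ is allowed to depend on $\xi$; here $M(t,x)$ does not depend on $\xi$ so $\diam M(t,x)=\sup_{\xi\in\Phi}\diam M(t,\xi)$ trivially, and \eqref{diameter:Phi} follows. (This is written in the stated form presumably to match a more general scheme; I would note that.)

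For the one-sided distance estimates \eqref{defect:estimate:Phi} and \eqref{distance:estimate:Phi}: given $y$, let $z\in\Phi(t,x;h)$ and $m\in M(t,x)$ with $z\in x+hf(t+h,z)+hm$. Consider the "defect" point $y-\bigl(x+hf(t+h,y)+hm\bigr)\in y-(x+hf(t+h,y)+hM(t,x))$. Writing $y-z=\bigl[y-(x+hf(t+h,y)+hm)\bigr]+h\bigl(f(t+h,y)-f(t+h,z)\bigr)$, take the inner product with $y-z$ and again apply OSL of $f(t+h,\cdot)$ to the last term, getting $(1-l_f(t+h)h)|y-z|^2\le\langle y-(x+hf(t+h,y)+hm),y-z\rangle\le|y-(x+hf(t+h,y)+hm)|\,|y-z|$. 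Hence $|y-z|\le\frac{1}{1-l_f(t+h)h}|y-(x+hf(t+h,y)+hm)|$, and optimizing over $m\in M(t,x)$ (taking $m$ the nearest-point projection realizing $\dist(y,x+hf(t+h,y)+hM(t,x))$) gives \eqref{defect:estimate:Phi}; note that this same $m$ produces, via the implicit function theorem, a genuine point $z\in\Phi(t,x;h)$, which is what makes the estimate meaningful. Estimate \eqref{distance:estimate:Phi} is the mirror image: fix $z\in\Phi(t,x;h)$ with driving point $m$, and bound $\dist(\{z\},\{y\})=|z-y|$; choosing now $m$ to realize $\dist(x+hf(t+h,y)+hM(t,x),y)$ does not directly work because $z$ is fixed first, so instead I would run the argument with $z$ ranging over $\Phi$: for each $z$ with its $m$, $|z-y|\le\frac{1}{1-l_f(t+h)h}|(x+hf(t+h,y)+hm)-y|\le\frac{1}{1-l_f(t+h)h}\dist(x+hf(t+h,y)+hM(t,x),y)$ since $m\in M(t,x)$; taking the sup over $z\in\Phi$ gives \eqref{distance:estimate:Phi}.

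The only genuine obstacle is nonemptiness of $\Phi(t,x;h)$, i.e.\ solvability of the implicit equation $z\in x+hf(t+h,z)+hM(t,x)$; everything else is Cauchy--Schwarz plus the OSL inequality. For nonemptiness I would appeal directly to the implicit function theorem of \cite{Beyn:Rieger:10a} applied to $G_{t,x,h}$, which has strictly negative ROSL constant on $D$ by A1) and A3), so that $0$ lies in its range and the solution set is nonempty and compact; alternatively one fixes $m\in M(t,x)$ and solves the single-valued equation $z=x+hf(t+h,z)+hm$, whose solution exists and is unique by the strict monotonicity of $z\mapsto z-hf(t+h,z)$ (OSL with constant $l_f(t+h)$ and $l_f(t+h)h<1$), and then observes $\Phi(t,x;h)=\{z(m):m\in M(t,x)\}$ is the continuous image of the compact set $M(t,x)$, hence compact and nonempty. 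I would present the latter, more elementary route and remark that it also yields the parameterization used later in Lemma \ref{representation:lemma}.
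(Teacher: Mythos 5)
Your proposal is correct in substance and follows essentially the same route as the paper: the paper's proof consists of observing that $\Phi(t,x;h)=S_{G_{t,x,h}}(0)$ for the continuous map $G_{t,x,h}(z)=x+hf(t+h,z)-z+hM(t,x)$, which is ROSL with the negative constant $-(1-l_f(t+h)h)$, and then citing Theorem \ref{solvability:theorem} from the appendix for nonemptiness, compactness, the diameter bound and the two one-sided distance estimates. You unpack that black box and re-derive the distance estimates by hand from the OSL property of $f(t+h,\cdot)$ plus Cauchy--Schwarz; these computations are exactly the ones hidden inside Theorem \ref{solvability:theorem}, and your alternative route to nonemptiness and compactness (solve $z=x+hf(t+h,z)+hm$ for each fixed $m\in M(t,x)$, then take the Lipschitz image of the compact set $M(t,x)$) is precisely what the paper establishes separately as Lemma \ref{representation:lemma}, so nothing is circular. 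Your handling of the one-sided distances, including the order of quantifiers in \eqref{defect:estimate:Phi} versus \eqref{distance:estimate:Phi}, is sound.

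One point needs correction. Your direct argument yields $\diam\Phi(t,x;h)\le\frac{h}{1-l_f(t+h)h}\,\diam M(t,x)$, and your assertion that $\diam M(t,x)=\sup_{\xi\in\Phi(t,x;h)}\diam M(t,\xi)$ holds ``trivially'' is false: the right-hand side evaluates the diameter of $M(t,\cdot)$ at the points $\xi$ of the image, which are in general different from $x$, so the two quantities need not coincide and neither dominates the other in general. This is, however, not really a defect of your argument: the paper's own application of Theorem \ref{solvability:theorem} produces $\sup_{\xi\in\Phi(t,x;h)}\diam G_{t,x,h}(\xi)=h\,\diam M(t,x)$, since the set-valued part of $G_{t,x,h}$ does not depend on $\xi$ -- that is, exactly the bound you proved -- and the formula printed in \eqref{diameter:Phi} appears to be carried over from the fully implicit scheme, where $M$ is evaluated at the unknown. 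The clean fix is to state the bound with $\diam M(t,x)$ and flag the discrepancy, rather than to assert a false identity between the two expressions.
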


\begin{proof}
By Theorem \ref{solvability:theorem}, $\Phi(t,x;h) = S_{G_{t,x,h}}(0)$ is nonempty and compact and
satisfies \eqref{diameter:Phi}. Moreover, \eqref{defect:estimate:Phi} and \eqref{distance:estimate:Phi}
are implied by Theorem \ref{solvability:theorem}, because for any $y\in\R^d$,
\begin{align*}
\dist(y,\Phi(t,x;h)) &= \dist(y,S_{G_{t,x,h}}(0)) \le \frac{1}{1-l_f(t+h)h}\dist(0,G_{t,x,h}(y))\\
&\le \frac{1}{1-l_f(t+h)h} \dist(y,x+hf(t+h,y)+hM(t,x)),\\
\dist(\Phi(t,x;h),y) &= \dist(S_{G_{t,x,h}}(0),y) \le \frac{1}{1-l_f(t+h)h}\dist(G_{t,x,h}(y),0)\\
&\le \frac{1}{1-l_f(t+h)h} \dist(x+hf(t+h,y)+hM(t,x),y).
\end{align*}
\end{proof}

\begin{corollary}
The iterates of the parameterized semi-implicit Euler scheme given by \eqref{def:trajectory:1} and \eqref{def:reachable:1}
are well-defined.
\end{corollary}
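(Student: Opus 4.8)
The plan is to argue by induction on $n\in\{0,\ldots,N\}$ that every finite trajectory segment $\{y_k\}_{k=0}^{n}$ satisfying \eqref{def:trajectory:1} can be extended one step further, so that $\ES_\Phi(\Delta_{\fath},x_0)$ is nonempty and each $\reachable_\Phi(t_n,x_0)$ is a well-defined nonempty subset of $\R^d$. The base case is immediate, since $y_0=x_0$ is prescribed.

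For the inductive step, suppose $y_n\in\R^d$ has been constructed. The key observation is that A3) says precisely $l_f(t_{n+1})h_n<1$, and since $t_{n+1}=t_n+h_n$ this is exactly the condition $l_f(t_n+h_n)h_n<1$ defining membership of $(t_n,y_n,h_n)$ in $D$; crucially this holds for \emph{every} choice of $y_n\in\R^d$, because $D$ imposes no restriction on the spatial variable. Hence $(t_n,y_n,h_n)\in D$, and Theorem \ref{properties:Phi} guarantees that $\Phi(t_n,y_n;h_n)$ is nonempty and compact. We may therefore pick any $y_{n+1}\in\Phi(t_n,y_n;h_n)$, completing the induction.

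It follows that $\ES_\Phi(\Delta_{\fath},x_0)\neq\emptyset$ and that $\reachable_\Phi(t_n,x_0)$ is a nonempty subset of $\R^d$ for each $t_n\in\Delta_{\fath}$; the compactness and the continuous/Lipschitz dependence on $(\fath,x_0)$ asserted after \eqref{def:reachable:1} then come from writing $\reachable_\Phi(t_n,x_0)$ as the $n$-fold composition $\Phi(t_{n-1},\cdot\,;h_{n-1})\circ\cdots\circ\Phi(t_0,\cdot\,;h_0)$ evaluated at $x_0$ and invoking the corresponding properties of each factor from Theorem \ref{properties:Phi}. There is essentially no obstacle here: the only point worth spelling out is that A3) is precisely the hypothesis needed to place each iterate in the domain $D$, independently of where the previous iterate happened to land.
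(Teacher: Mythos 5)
Your argument is correct and is essentially the paper's own proof: A3) places $(t_n,y_n,h_n)$ in $D$ for every $y_n\in\R^d$ (since $D$ does not constrain the spatial variable), and Theorem \ref{properties:Phi} then yields nonemptiness of $\Phi(t_n,y_n;h_n)$ at each step. You merely make the induction explicit, which the paper leaves implicit.
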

\begin{proof}
Assumption A3) implies that $(t_n,x,h_n)\in D$ for any $x\in\R^d$, and hence the defining implicit 
inclusion \eqref{Euler:1} is solvable in every step according to Theorem \ref{properties:Phi}.
\end{proof}

The following lemma will be useful for the discussion of connectedness and convexity of the images of $\Phi$.
\begin{lemma}
\label{representation:lemma}
For fixed $(t,x,h)\in D$, the image of the parameterized semi-implicit Euler scheme can be represented as 
\begin{equation}
\label{implicit:union}
\Phi(t,x;h) = \cup_{m\in M(t,x)}z(m),
\end{equation}
where $z(m)$ is the unique solution $z$ of the implicit equation 
\begin{equation}
\label{implicit:equation}
z = x+hf(t+h,z)+hm.
\end{equation}
Moreover, the mapping $z:M(t,x)\rightarrow\R^d$ is $\frac{h}{1-l_f(t+h)h}$-Lipschitz.
\end{lemma}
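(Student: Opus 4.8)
The plan is to establish the representation \eqref{implicit:union}--\eqref{implicit:equation} by exhibiting, for each fixed $m\in M(t,x)$, a unique solution $z(m)$ of the single-valued implicit equation $z = x+hf(t+h,z)+hm$, and then showing that $\Phi(t,x;h)$ is precisely the union of these solutions over $m\in M(t,x)$. Finally I would quantify the Lipschitz dependence of $z(\cdot)$ on $m$.

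For the first step, fix $m\in M(t,x)$ and consider the single-valued map $g_m(z):=x+hf(t+h,z)+hm-z$. By assumption A1), $z\mapsto f(t+h,z)$ is OSL with constant $l_f(t+h)$, so $z\mapsto hf(t+h,z)-z$ is OSL with constant $l_f(t+h)h-1<0$ on the domain $D$; adding the constant vector $x+hm$ does not change the OSL constant, so $g_m$ is OSL with a strictly negative constant $l_f(t+h)h-1$. Existence and uniqueness of a zero $z(m)$ of $g_m$ then follows from the single-valued version of Theorem \ref{solvability:theorem} (equivalently, $z(m)=S_{g_m}(0)$ is a singleton), since the ROSL/OSL solvability result specializes to the single-valued case. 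Alternatively, since $m$ is a single point, $z(m)$ is exactly $\Phi(t,x';h)$ would be if $M$ were the singleton $\{m\}$, and the nonemptiness/diameter bound \eqref{diameter:Phi} forces it to be a single point because $\diam M(t,\xi)=0$ there.

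For the second step, the inclusion $z\in x+hf(t+h,z)+hM(t,x)$ holds if and only if there is some $m\in M(t,x)$ with $z = x+hf(t+h,z)+hm$, i.e.\ $z=z(m)$; this is just unwinding the definition of addition of the point $x+hf(t+h,z)$ and the set $hM(t,x)$. Hence $\Phi(t,x;h)=\bigcup_{m\in M(t,x)}\{z(m)\}$, which is \eqref{implicit:union}.

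For the Lipschitz estimate, take $m,m'\in M(t,x)$ with solutions $z:=z(m)$, $z':=z(m')$. Subtracting the two defining equations gives $z-z' = h\bigl(f(t+h,z)-f(t+h,z')\bigr)+h(m-m')$. Pairing with $z-z'$ and using the OSL property of $f(t+h,\cdot)$,
\[
|z-z'|^2 = h\langle f(t+h,z)-f(t+h,z'),z-z'\rangle + h\langle m-m',z-z'\rangle \le l_f(t+h)h\,|z-z'|^2 + h|m-m'|\,|z-z'|,
\]
so $(1-l_f(t+h)h)|z-z'|^2\le h|m-m'|\,|z-z'|$, and dividing by $|z-z'|$ (trivial if $z=z'$) yields $|z-z'|\le \frac{h}{1-l_f(t+h)h}|m-m'|$, as claimed. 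The only delicate point is justifying uniqueness in the first step; I expect this to be the main obstacle, and I would handle it by invoking the single-valued specialization of Theorem \ref{solvability:theorem} rather than reproving a fixed-point argument.
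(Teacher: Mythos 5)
Your proposal is correct and follows essentially the same route as the paper: existence via Theorem \ref{solvability:theorem} applied to the single-valued equation, uniqueness and the Lipschitz bound from the OSL property of $f(t+h,\cdot)$, and the representation \eqref{implicit:union} by unwinding the Minkowski sum. The only cosmetic difference is that you derive uniqueness and the Lipschitz constant by direct inner-product computations, whereas the paper quotes the diameter and defect estimates of Theorem \ref{solvability:theorem} for the same purpose; both yield the constant $\frac{h}{1-l_f(t+h)h}$.
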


\begin{proof}
Theorem \ref{solvability:theorem} applied to the equation 
\begin{equation}
\label{local:6}
0=x+hf(t+h,z)-z+hm
\end{equation}
guarantees the
existence of a solution of \eqref{implicit:equation}. Assume that \eqref{implicit:equation} has two
solutions $z$ and $z'$. Then $z-z'=h(f(t+h,z)-f(t+h,z'))$, and by the OSL property,
\[|z-z'|^2 = h\langle f(t+h,z)-f(t+h,z'),z-z'\rangle \le l_f(t+h)h|z-z'|^2,\]
which forces $|z-z'|^2=0$, because $l_f(t+h)h<1$.

If $z,z'\in\Phi(t,x;h)$, then there exist $m,m'\in M(t,x)$ such that $z$ and $z'$ satisfy
\eqref{implicit:equation} with $m$ and $m'$. Theorem \ref{solvability:theorem} applied to \eqref{local:6} 
ensures that
\begin{align*}
&|z(m)-z(m')|\\ 
&\le -l_{G_{t,x,h}}^{-1}|x+hf(t+h,z(m'))-z(m')+hm| \\
&\le -l_{G_{t,x,h}}^{-1}|x+hf(t+h,z(m'))-z(m')+hm'| + -l_{G_{t,x,h}}^{-1}h|m-m'|\\
&= -l_{G_{t,x,h}}^{-1}h|m-m'|.
\end{align*}
\end{proof}

\begin{corollary}
For fixed $(t,x,h)\in D$, the set $\Phi(t,x;h)$ is path-connected.
\end{corollary}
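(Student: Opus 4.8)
The plan is to deduce path-connectedness directly from the representation in Lemma~\ref{representation:lemma}. That lemma gives $\Phi(t,x;h)=\bigcup_{m\in M(t,x)}z(m)=z(M(t,x))$, where $z:M(t,x)\rightarrow\R^d$ is $\tfrac{h}{1-l_f(t+h)h}$-Lipschitz, hence in particular continuous. Since $M(t,x)\in\CoCpSets(\R^d)$, it is convex, and therefore any two of its points are joined by a straight segment lying entirely in $M(t,x)$; in particular $M(t,x)$ is path-connected. Path-connectedness is preserved under continuous images, so $\Phi(t,x;h)=z(M(t,x))$ is path-connected.

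To make this explicit, I would take arbitrary $z_0,z_1\in\Phi(t,x;h)$. By Lemma~\ref{representation:lemma} there exist $m_0,m_1\in M(t,x)$ with $z(m_i)=z_i$ for $i=0,1$. Define $\gamma:[0,1]\rightarrow M(t,x)$ by $\gamma(s):=(1-s)m_0+sm_1$, which takes values in $M(t,x)$ by convexity, and set $\psi:=z\circ\gamma:[0,1]\rightarrow\R^d$. Then $\psi$ is continuous (indeed Lipschitz, with constant $\tfrac{h}{1-l_f(t+h)h}\,|m_0-m_1|$), $\psi(0)=z_0$, $\psi(1)=z_1$, and $\psi(s)=z(\gamma(s))\in\Phi(t,x;h)$ for every $s\in[0,1]$, again by the representation in Lemma~\ref{representation:lemma}. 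Hence $\psi$ is a path in $\Phi(t,x;h)$ connecting $z_0$ and $z_1$.

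There is essentially no hard step here: the entire content has already been absorbed into Lemma~\ref{representation:lemma}, whose proof in turn rests on Theorem~\ref{solvability:theorem}. The only point that needs a moment's care is that $z(m)$ genuinely belongs to $\Phi(t,x;h)$ for \emph{every} $m\in M(t,x)$ — i.e.\ that the union in \eqref{implicit:union} is over all of $M(t,x)$ and not merely those $m$ arising from given points of $\Phi$ — but this is exactly what the representation lemma asserts, so nothing further is required.
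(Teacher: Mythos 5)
Your proof is correct and follows the paper's argument exactly: both deduce path-connectedness from the representation $\Phi(t,x;h)=z(M(t,x))$ in Lemma~\ref{representation:lemma}, using convexity of $M(t,x)$ and continuity (Lipschitz continuity) of $z$. The explicit path $\psi=z\circ\gamma$ you write down is just the standard unwinding of ``continuous images of path-connected sets are path-connected.''
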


\begin{proof}
The set $\Phi(t,x;h)=z(M(t,x))$ is path-connected, because $M(t,x)$ is path-connected 
and $z:M(t,x)\rightarrow\R^d$ is continuous.
\end{proof}

\begin{corollary}
If $d=1$, then the set $\Phi(t,x;h)$ is a convex interval.
\end{corollary}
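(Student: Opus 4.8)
The claim follows at once from Lemma \ref{representation:lemma}. First I would recall that, by assumption A2), $M(t,x)\in\CoCpSets(\R)$, so $M(t,x)$ is a compact interval $[\alpha,\beta]\subset\R$ (possibly degenerate, i.e.\ $\alpha=\beta$); in particular it is connected. Lemma \ref{representation:lemma} gives $\Phi(t,x;h)=z(M(t,x))$ with $z:M(t,x)\to\R$ continuous (in fact Lipschitz). Since the continuous image of a connected set is connected and the connected subsets of $\R$ are precisely the intervals, $\Phi(t,x;h)$ is an interval; by Theorem \ref{properties:Phi} it is also compact. Hence $\Phi(t,x;h)$ is a compact interval, which is convex.

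If one prefers an explicit description of the endpoints, I would additionally observe that in dimension one the map $z$ is strictly increasing. Indeed, for $m,m'\in M(t,x)$ the defining equation \eqref{implicit:equation} yields $z(m)-z(m')=h\big(f(t+h,z(m))-f(t+h,z(m'))\big)+h(m-m')$, and testing this identity against $z(m)-z(m')$ and using the OSL property of $f(t+h,\cdot)$ from A1) gives $(1-l_f(t+h)h)(z(m)-z(m'))^2\le h(m-m')(z(m)-z(m'))$. Since $1-l_f(t+h)h>0$, this forces $z(m)-z(m')$ to have the same sign as $m-m'$ (and to vanish only for $m=m'$), so $\Phi(t,x;h)=[z(\min M(t,x)),z(\max M(t,x))]$.

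There is no genuine obstacle in this argument; the only point worth stressing is that the convexity of $M(t,x)$ — guaranteed by A2) — is what makes $M(t,x)$ an interval and makes Lemma \ref{representation:lemma} applicable, and this is exactly where the hypothesis $d=1$ enters (connectedness of $M(t,x)$ in $\R$ already suffices for convexity of its image, whereas in higher dimension a convex image of $z$ is not automatic).
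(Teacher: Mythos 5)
Your proof is correct and follows essentially the same route as the paper, which combines the compactness of $\Phi(t,x;h)$ from Theorem \ref{properties:Phi} with the (path-)connectedness of $z(M(t,x))$ from Lemma \ref{representation:lemma} and then uses that compact connected subsets of $\R$ are convex intervals. Your additional monotonicity argument identifying the endpoints as $z(\min M(t,x))$ and $z(\max M(t,x))$ is a correct bonus, but not needed for the statement.
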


\begin{proof}
Every compact connected subset of $\R^1$ is a convex interval.
\end{proof}

The images of the parameterized semi-implicit Euler scheme are not necessarily convex for $d>1$.
\begin{example}
\label{not:convex}
Consider the mapping $F:\R^2\rightarrow\R^2$ given by
\[F(x_1,x_2) = f(x_1,x_2) + M(x_1,x_2) = \frac12\begin{pmatrix}-x_1-x_2\\ x_1-x_2-x_2^3\end{pmatrix} 
+ \begin{pmatrix} [0,\frac{13}{2}]\\0 \end{pmatrix}.\]
Because of
\begin{align*}
\langle \begin{pmatrix} x_1\\x_2 \end{pmatrix} ,f(x_1,x_2)\rangle
&= \frac12\langle\begin{pmatrix}x_1\\x_2\end{pmatrix},\begin{pmatrix}-x_1-x_2\\ x_1-x_2-x_2^3\end{pmatrix}\rangle\\
&=-\frac12(x_1^2+x_2^2+x_2^4) \le -\frac12(x_1^2+x_2^2),
\end{align*}
the mappings $f$ and $F$ are $-\frac12$-ROSL.
According to Lemma \ref{representation:lemma}, the image $\Phi(0,0,1)$ is a Lipschitz continuous curve 
parameterized over $M(0,0)=([0,\frac{13}{2}],0)^T$ and hence over
the interval $[0,\frac{13}{2}]$. Solving \eqref{implicit:equation} for $m_0=0$, $m_1=\frac{43}{16}$
and $m_2=\frac{13}{2}$ yields $z(m_0)=(0,0)^T$, $z(m_1)=(\frac{13}{8},\frac12)^T$ and $z(m_2)=(4,1)^T$.
As these three points are not on a straight line (see Figure \ref{eulerbild}), 
the image $\Phi(0,0,1)$ is not convex.
\end{example}

\begin{figure}
\begin{center}
\includegraphics[scale=0.5]{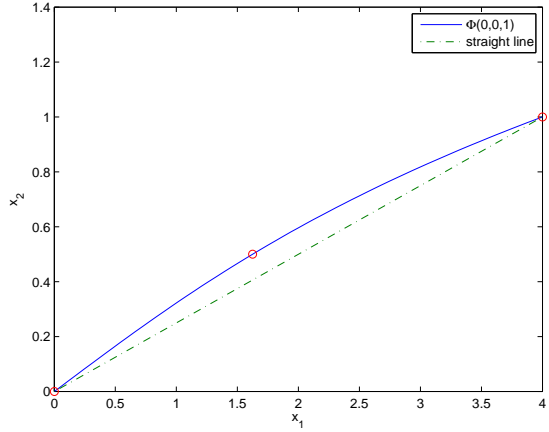}
\end{center}
\caption{The image $\Phi(0,0,1)$ is not convex for the right-hand side $F$ from Example \ref{not:convex}. 
The highlighted points correspond to the three exact solutions given there. \label{eulerbild}}
\end{figure}

\begin{theorem}
\label{Euler:1:continuous}
The set-valued mapping $(t,x,h)\mapsto\Phi(t,x;h)$ is jointly continuous on $D$,
and $x\mapsto\Phi(t,x;h)$ is $\frac{1+hL_M(t)}{1-l_f(t+h)h}$-Lipschitz. Moreover,
$\Phi(t,x;0)=x$ for any $t\in\R$.
\end{theorem}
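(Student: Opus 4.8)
The plan is to derive joint continuity of $(t,x,h)\mapsto\Phi(t,x;h)$ from the defect and distance estimates \eqref{defect:estimate:Phi} and \eqref{distance:estimate:Phi} of Theorem \ref{properties:Phi}, together with the continuity of the data $f$, $M$ and $l_f$. Fix $(t,x,h)\in D$ and a sequence $(t_k,x_k,h_k)\to(t,x,h)$ in $D$ (which is legitimate since $D$ is relatively open by Lemma \ref{D:open}). I want to bound $\dist_H(\Phi(t_k,x_k;h_k),\Phi(t,x;h))$. For the first half, pick any $z\in\Phi(t,x;h)$ and use \eqref{defect:estimate:Phi} at the point $y=z$ for the perturbed data: $\dist(z,\Phi(t_k,x_k;h_k))\le\frac{1}{1-l_f(t_k+h_k)h_k}\dist\bigl(z,\,x_k+h_kf(t_k+h_k,z)+h_kM(t_k,x_k)\bigr)$. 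Since $z\in x+hf(t+h,z)+hM(t,x)$, I can write $z=x+hf(t+h,z)+hm$ for some $m\in M(t,x)$ and, using that $M(t_k,x_k)\to M(t,x)$ in Hausdorff distance, choose $m_k\in M(t_k,x_k)$ with $|m-m_k|\to0$; then the argument of $\dist$ above is at most $|x-x_k|+|hf(t+h,z)-h_kf(t_k+h_k,z)|+|hm-h_km_k|$, which tends to $0$ uniformly as the data converge, while the prefactor converges to $\frac{1}{1-l_f(t+h)h}$. This gives $\dist(\Phi(t,x;h),\Phi(t_k,x_k;h_k))\to0$, but one must be slightly careful that the convergence is uniform in $z\in\Phi(t,x;h)$; compactness of $\Phi(t,x;h)$ (Theorem \ref{properties:Phi}) and the local uniform continuity of $f$ on compact sets handle this.

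For the reverse direction I use \eqref{distance:estimate:Phi}. Given $z_k\in\Phi(t_k,x_k;h_k)$, I have $\dist(z_k,\Phi(t,x;h))\le\frac{1}{1-l_f(t+h)h}\dist\bigl(x+hf(t+h,z_k)+hM(t,x),\,z_k\bigr)$, and since $z_k\in x_k+h_kf(t_k+h_k,z_k)+h_kM(t_k,x_k)$, writing $z_k=x_k+h_kf(t_k+h_k,z_k)+h_km_k$ and comparing termwise shows the right-hand side is bounded by $\frac{1}{1-l_f(t+h)h}\bigl(|x-x_k|+|hf(t+h,z_k)-h_kf(t_k+h_k,z_k)|+h\,\dist(hm_k,hM(t,x))/h+\dots\bigr)$. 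The one subtlety here is that the $z_k$ must be shown to lie in a fixed bounded set so that $f$ can be controlled uniformly; this follows from \eqref{diameter:Phi} plus an a priori bound on $\|\Phi(t_k,x_k;h_k)\|$ obtained by applying \eqref{distance:estimate:Phi} with $y=x_k$ and using boundedness of $M$ near $(t,x)$. Once the $z_k$ are confined to a compact set, all the discrepancies above go to $0$, yielding $\dist(\Phi(t_k,x_k;h_k),\Phi(t,x;h))\to0$.

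For the Lipschitz estimate in $x$, I fix $t$ and $h$ and apply \eqref{distance:estimate:Phi} (and symmetrically \eqref{defect:estimate:Phi}) with the two spatial arguments $x$ and $x'$: for $z'\in\Phi(t,x';h)$ write $z'=x'+hf(t+h,z')+hm'$ with $m'\in M(t,x')$, pick $m\in M(t,x)$ with $|m-m'|\le L_M(t)|x-x'|$, and estimate $\dist(z',\Phi(t,x;h))\le\frac{1}{1-l_f(t+h)h}\bigl(|x-x'|+h|m-m'|\bigr)\le\frac{1+hL_M(t)}{1-l_f(t+h)h}|x-x'|$. Taking the supremum over $z'$ and then the symmetric bound gives the claimed Lipschitz constant. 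Finally, $\Phi(t,x;0)=\{z:z\in x+0\cdot f(t,z)+0\cdot M(t,x)\}=\{z:z=x\}=\{x\}$, which is immediate from \eqref{Euler:1} with $h=0$ (note $(t,x,0)\in D$ since $l_f(t)\cdot0=0<1$). The main obstacle I anticipate is the bookkeeping needed to make the convergences uniform — specifically, trapping the points $z_k\in\Phi(t_k,x_k;h_k)$ in a common compact set so that the continuity modulus of $f$ can be invoked; everything else is a routine application of the two one-step estimates from Theorem \ref{properties:Phi}.
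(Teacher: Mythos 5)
Your proof is correct and follows essentially the same route as the paper: both rely on the one-step defect estimate \eqref{defect:estimate:Phi} (equivalently Theorem \ref{solvability:theorem}) applied once to the perturbed and once to the unperturbed scheme, an a priori bound confining all images to a common compact set, and uniform continuity of $f$ and $M$ there to upgrade pointwise to uniform convergence, with the Lipschitz constant read off from the same estimate at fixed $t,h$. The only slip is cosmetic: in the reverse direction you cite \eqref{distance:estimate:Phi} and write $\dist\bigl(x+hf(t+h,z_k)+hM(t,x),z_k\bigr)$, but your subsequent termwise bound (containing $\dist(hm_k,hM(t,x))$) shows you actually mean the defect quantity $\dist\bigl(z_k,\,x+hf(t+h,z_k)+hM(t,x)\bigr)$ from \eqref{defect:estimate:Phi}, which is the correct one.
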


\begin{proof}
Fix any $(t,x,h)\in D$. By Lemma \ref{D:open}, there exists some 
$R>0$ such that $D_R:=B_R(t,x,h)\cap([0,T]\times\R^d\times\R_+)\subset D$ and hence $D_R=B_R(t,x,h)\cap D$.
By Theorem \ref{properties:Phi},
\begin{equation}
\label{local:1}
\|\Phi(t',x';h')\| \le \frac{1}{1-l_f(t'+h')h'}\|x'+h'f(t'+h',0)+h'M(t',x')\|
\end{equation}
for any $(t',x',h')\in D_R$.
Since $D_R$ is compact and the right-hand side of \eqref{local:1} is continuous in $(t',x',h')$,
there exists some $C>0$ such that 
\[\|\Phi(t',x';h')\| \le C\ \text{for all}\ (t',x',h')\in D_R,\]
so that $\Phi(D_R)\subset B_C(0)$. For any $z'\in\Phi(t',x';h')$ with $(t',x',h')\in D_R$, 
\begin{align}
\label{local:2}
&\dist(z',\Phi(t,x;h)) \nonumber\\
&\le -l_{G_{t,x,h}}^{-1} \dist(0,G_{t,x,h}(z')) \nonumber \\
&\le -l_{G_{t,x,h}}^{-1} \big( \dist(0,G_{t',x',h'}(z')) + \dist(G_{t',x',h'}(z'),G_{t,x,h}(z')) \big)\\
&\le -l_{G_{t,x,h}}^{-1} \big(|x'-x| + \dist(h'M(t',x'),hM(t,x))\big) \nonumber\\
&\quad -l_{G_{t,x,h}}^{-1} |h'f(t'+h',z')-hf(t+h,z')| =: \phi(t',x',h',z') \nonumber
\end{align}
according to Theorem \ref{solvability:theorem}.
The function $\phi:D_R\times B_C(0)\rightarrow\R_+$ is continuous, and for any $z'\in B_C(0)$,
$\phi(t',x',h',z')\rightarrow 0$ as $(t',x',h')\rightarrow (t,x,h)$.
Assume that 
\[\sup_{z'\in B_C(0)} \phi(t',x',h',z') \nrightarrow 0\ \text{as}\ (t',x',h')\rightarrow (t,x,h).\]
Then there exists some $\epsilon>0$ and a sequence $(t_n',x_n',h_n',z_n')_{n\in\N}\subset D_R\times B_C(0)$,
such that $\phi(t_n',x_n',h_n',z_n')>\epsilon$ and $(t_n',x_n',h_n')\in B_{1/n}(t,x,h)$. As $B_C(0)$
is compact, there exists a subsequence $(t_n',x_n',h_n',z_n')_{n\in\N'}$, $\N'\subset\N$, and some $z_0\in B_C(0)$ 
such that $z_n'\rightarrow z_0$ as $n'\rightarrow\infty$. But then
\[\phi(t_n',x_n',h_n',z_n') \rightarrow \phi(t,x,h,z_0) = 0\]
is a contradiction, so that
\[\sup_{z'\in B_C(0)} \phi(t',x',h',z') \rightarrow 0\ \text{as}\ (t',x',h')\rightarrow (t,x,h).\]
Hence
\[\dist(\Phi(t',x';h'),\Phi(t,x;h)) = \sup_{z'\in\Phi(t',x';h')}\dist(z',\Phi(t,x;h))\rightarrow 0.\]
The proof of the statement $\dist(\Phi(t,x;h),\Phi(t',x';h')) \rightarrow 0$ is almost identical, 
because the set $\Phi(t,x;h)$ is compact according to Theorem \ref{properties:Phi}.
Lipschitz continuity of $\Phi$ in $x$ follows from \eqref{local:2} with $t=t'$ and $h=h'$.
\end{proof}

\subsection{Dynamic properties and convergence analysis}
\label{single:step:dynamics}

The solutions of the differential inclusion and the parameterized semi-implicit Euler scheme are uniformly bounded.
\begin{lemma}
\label{boundedness:lemma}
There exists some constant $C>0$ such that every solution $x(\cdot)$ of the differential inclusion
\eqref{ODI} and every trajectory $\{y_n\}_n$ of the parameterized semi-implicit Euler scheme with $y_0=x_0$ 
are contained in $B_C(0)\subset\R^d$.
\end{lemma}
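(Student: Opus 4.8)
The plan is to derive both bounds from one and the same \emph{energy estimate}: differentiate (resp.\ form the inner product defining) $\tfrac12|x|^2$, turn the one-sided Lipschitz structure of $f$ into a Gronwall-type inequality, and read off the linear growth of $M$ from A2). The conceptual reason this is the right route is that A1) controls only $\langle f(t,x),x\rangle$, not $|f(t,x)|$ (think of $f(x)=-x^3$ in one dimension), so no pointwise bound on $|\dot x|$ is available and one is forced to work with the quadratic quantity. Before anything else I would record the two elementary facts that make the estimates close: continuity of $f,M$ and compactness of $[0,T]$ give
\[ C_0 := \sup_{t\in[0,T]}\bigl(|f(t,0)|+\|M(t,0)\|\bigr) < \infty, \]
and A2) yields $\|M(t,x)\|\le\|M(t,0)\|+L_M(t)|x|$, since every element of $M(t,x)$ lies within $L_M(t)|x|$ of an element of $M(t,0)$.

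For the continuous part, write a solution of \eqref{ODI} as $\dot x(t)=f(t,x(t))+m(t)$ with $m(t)\in M(t,x(t))$ a.e., and differentiate $\tfrac12|x(t)|^2=\tfrac12\langle x(t),x(t)\rangle$. Splitting $\langle x(t),f(t,x(t))\rangle=\langle x(t),f(t,x(t))-f(t,0)\rangle+\langle x(t),f(t,0)\rangle$, applying the OSL inequality from A1) with $x'=0$ to the first summand, and using Cauchy--Schwarz together with the bound on $\|M\|$ for the remaining terms, one gets for a.e.\ $t$
\[ \tfrac{d}{dt}\tfrac12|x(t)|^2 \le (l_f(t)+L_M(t))|x(t)|^2 + C_0|x(t)| \le \bigl(l_f(t)+L_M(t)+\tfrac12\bigr)|x(t)|^2 + \tfrac12 C_0^2. \]
Since $t\mapsto|x(t)|^2$ is absolutely continuous, Gronwall's lemma bounds $|x(t)|^2$ by a constant $C_1^2$ depending only on $|x_0|$, $C_0$ and $\int_0^T(|l_f|+L_M)$, i.e.\ only on the data of \eqref{ODI}.

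The discrete part is the semi-implicit analogue of the same computation. Given a trajectory $\{y_n\}$, Lemma \ref{representation:lemma} provides $m_n\in M(t_n,y_n)$ with $y_{n+1}=y_n+h_nf(t_{n+1},y_{n+1})+h_nm_n$; taking the inner product with $y_{n+1}$, splitting the $f$-term about $f(t_{n+1},0)$ exactly as above, using OSL and $\|M(t_n,y_n)\|\le\|M(t_n,0)\|+L_M(t_n)|y_n|$, and then dividing by $|y_{n+1}|$ (the inequality being trivial when $y_{n+1}=0$) gives
\[ \bigl(1-h_n l_f(t_{n+1})\bigr)|y_{n+1}| \le \bigl(1+h_n L_M(t_n)\bigr)|y_n| + h_n C_0. \]
Assumption A3) makes the bracket on the left strictly positive, so this is a recursion $|y_{n+1}|\le\alpha_n|y_n|+\beta_n$ with finite positive coefficients; iterating over the finitely many grid points bounds all $|y_n|$ by a constant $C_2$, and then $C:=\max\{C_1,C_2\}$ proves the lemma.

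The step that deserves care — and where a careful reader would push back — is the \emph{size} of $C_2$: iterating the recursion produces the factor $\prod_n (1-h_n l_f(t_{n+1}))^{-1}$, which for a fixed admissible grid is a finite product of finite numbers but is not, by A3) alone, controlled uniformly over all admissible $\fath$ (a single step with $h_n l_f(t_{n+1})$ close to $1$ blows it up). In the dissipative regime $l_f\le 0$ this factor is $\le 1$, so $1+z\le e^z$ gives the uniform bound $|y_n|\le e^{\int_0^T L_M}\bigl(|x_0|+C_0T\bigr)$, which has exactly the shape of $C_1$; in general one needs the step-sizes bounded away from the stability threshold (say $h_n l_f(t_{n+1})\le 1-\delta$), after which the same exponential bound goes through. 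I would state $C_2$ in this exponential form so that the continuous and discrete bounds are visibly of the same type, and note explicitly whether the intended $C$ is allowed to depend on $\Delta_{\fath}$ or is required uniform in it.
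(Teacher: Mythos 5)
Your proof is correct. The continuous half is essentially the paper's own argument: the paper also tests against $x(t)$, splits off $v(t)\in F(t,0)$ via the ROSL property of the full right-hand side $F$, and applies Gronwall; it works with $\tfrac{d}{dt}|x(t)|$ directly and therefore needs the slightly awkward restriction to subintervals where $|x(t)|\ge 1$, which your $\tfrac12|x|^2$ formulation avoids cleanly. The discrete half is where you genuinely diverge. The paper does not form any inner product with $y_{n+1}$; it argues by induction using estimate \eqref{distance:estimate:Phi} of Theorem \ref{properties:Phi}, bounding $|y_{k+1}|\le \frac{1}{1-l_f(t_{k+1})h_k}\sup_{y\in B_{R_k}(0)}\|y+h_kf(t_{k+1},0)+h_kM(t_k,y)\|=:R_{k+1}$, which is finite simply because a continuous map is bounded on a compact ball. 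That argument is softer (it does not even use the linear growth $\|M(t,x)\|\le\|M(t,0)\|+L_M(t)|x|$) but is completely non-quantitative. Your recursion $(1-h_nl_f(t_{n+1}))|y_{n+1}|\le(1+h_nL_M(t_n))|y_n|+h_nC_0$ buys an explicit bound of the same exponential shape as the continuous one, and your closing caveat is a sharper observation than the paper makes: the factor $\prod_n(1-h_nl_f(t_{n+1}))^{-1}$ is not controlled by A3) alone, and the paper's constants $R_{k+1}$ carry exactly the same factor. Since the grid $\Delta_{\fath}$ is fixed once and for all in the paper, a grid-dependent $C$ suffices for the lemma as stated (though, as you note, uniformity over families of grids — relevant because $C$ later enters $P$ and $\Gamma$ in the convergence estimates — would require $h_nl_f(t_{n+1})$ bounded away from $1$ or $l_f\le 0$). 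No gap; your version is, if anything, the more informative proof.
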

\begin{proof}
Consider an arbitrary solution $x(\cdot)$ of \eqref{ODI}.
By the ROSL property, for every $t\in[0,T]$ there exists an element $v(t)\in F(t,0)$ such that
\[\langle\dot x(t)-v(t),x(t)\rangle \le l(t)|x(t)|^2,\]
and hence
\[ |x(t)|\frac{d}{dt}|x(t)| = \langle\dot x(t)-v(t),x(t)\rangle + \langle v(t),x(t)\rangle
\le l(t)|x(t)|^2 + |v(t)||x(t)|,\]
so that 
\[\frac{d}{dt}|x(t)| \le l(t)|x(t)| + |v(t)| \le l(t)|x(t)| + \|F(t,0)\|\]
whenever $x(t)\neq 0$. Applying the Gronwall Lemma on every subinterval $[t_b,t_e]\subset[0,T]$
with $|x(t)|\ge 1$ for any $t\in[t_b,t_e]$ yields the desired bound.

Boundedness of the discrete trajectories is shown by induction.  By assumption, $y_0=x_0\in B_{|x_0|}(0)$.
Assume that there exists some $R_k>0$ such that $y_k\in B_{R_k}(0)$ for all discrete trajectories $\{y_n\}_n$.
Then estimate \eqref{distance:estimate:Phi} from Theorem \ref{properties:Phi} yields that
\begin{align*}
|y_{k+1}| &\le \|\Phi(t_k,y_k;h_{k})\| \le \frac{1}{1-l_f(t_{k+1})h_k} \|y_k+h_kf(t_{k+1},0)+h_kM(t_k,y_k)\| \\
&\le \frac{1}{1-l_f(t_{k+1})h_k} \sup_{y\in B_{R_k}(0)}\|y+h_kf(t_{k+1},0)+h_kM(t_k,y)\|=: R_{k+1} < \infty,
\end{align*}
because $B_{R_k}(0)$ is compact and the continuous function $y\mapsto y+h_kM(t_k,y)$ is bounded on $B_{R_k}(0)$. 
An iteration of this argument shows that any trajectory $\{y_n\}_n$ is contained in
$B_R(0)$ with $R:=\max_{k=0,\ldots,N}R_k$.
\end{proof}

As a consequence, the ODI \eqref{ODI} and the parameterized semi-implicit Euler scheme \eqref{Euler:1} can be considered on 
the compact set $[0,T]\times B_C(0)$, where $f$ and $M$ are uniformly continuous so that the moduli 
of continuity $\tau_f,\chi_f,\tau_M:\R_+\rightarrow\R_+$ given by
\begin{align*}
\tau_f(\delta)&:=\sup\{|f(t,x)-f(t',x)|: t,t'\in[0,T], |t-t'|\le\delta, x\in B_C(0)\},\\
\chi_f(\delta)&:=\sup\{|f(t,x)-f(t,x')|: t\in[0,T], x,x'\in B_C(0), |x-x'|\le\delta\},\\
\tau_M(\delta)&:=\sup\{\dist(M(t,x),M(t',x)): t,t'\in[0,T], |t-t'|\le\delta, x\in B_C(0)\}
\end{align*}
are well-defined. The error term
\[\Gamma(h,t) := \tau_f(h) + \chi_f(Ph) + \tau_M(h) + L_M(t)Ph\]
with $P:=\max_{x\in B_C(0)}|f(x)|+\max_{x\in B_C(0)}\|M(x)\|$ will appear 
frequently in the following discussion. Note that
\begin{equation}
\label{gamma:to:zero}
\|\Gamma(h,\cdot)\|_\infty \rightarrow 0\ \text{as}\ h\rightarrow 0,
\end{equation}
because $L_M$ is bounded.

\medskip

Proposition \ref{discrete:Filippow:1} is an existence and stability result for solutions 
of the semi-implicit Euler scheme \eqref{Euler:1} and therefore a discrete counterpart
of Theorem \ref{continuous:filippov}.

\begin{proposition}
\label{discrete:Filippow:1}
For any sequence $\{x_n\}_n\subset\R^d$, there exists a solution $\{y_n\}_n$ of the parameterized
semi-implicit Euler scheme \eqref{Euler:1} satisfying
\begin{align}
\label{local:4}
|x_n-y_n| &\le e^{\sum_{k=0}^{n-1}(\frac{l_f(t_{k+1})h_k}{1-l_f(t_{k+1})h_k}+L_M(t_k)h_k)}|x_0-y_0| \nonumber\\
&\quad + \sum_{k=0}^{n-1}e^{\sum_{j=k}^{n-1}\frac{l_f(t_{j+1})h_j}{1-l_f(t_{j+1})h_j}+\sum_{j=k+1}^{n-1}L_M(t_j)h_j}h_kg_k,
\end{align}
where $g_n:=\dist(\frac{1}{h_n}(x_{n+1}-x_n), f(t_{n+1},x_{n+1})+M(t_n,x_n))$.
\end{proposition}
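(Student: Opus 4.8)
The plan is to construct the trajectory $\{y_n\}$ inductively, at each step choosing $y_{n+1}\in\Phi(t_n,y_n;h_n)$ as close as possible to $x_{n+1}$, and then to propagate the error bound via a discrete Gronwall argument. The key estimate at a single step is \eqref{defect:estimate:Phi} from Theorem \ref{properties:Phi}, which gives a point in $\Phi(t_n,y_n;h_n)$ whose distance to $x_{n+1}$ is controlled by $\frac{1}{1-l_f(t_{n+1})h_n}\dist(x_{n+1},y_n+h_nf(t_{n+1},x_{n+1})+h_nM(t_n,y_n))$. So first I would fix $y_0=x_0$ and, assuming $y_n$ has been constructed, set $y_{n+1}$ to be a nearest point of $\Phi(t_n,y_n;h_n)$ to $x_{n+1}$; this is legitimate because $\Phi(t_n,y_n;h_n)$ is nonempty and compact by Theorem \ref{properties:Phi} and A3).

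Next I would bound $|x_{n+1}-y_{n+1}|$. Using the single-step estimate, it suffices to bound
\[
\dist\bigl(x_{n+1},\,y_n+h_nf(t_{n+1},x_{n+1})+h_nM(t_n,y_n)\bigr).
\]
I would insert the comparison point $x_n+h_nf(t_{n+1},x_{n+1})+h_nM(t_n,x_{n+1})$ and split the distance into three contributions: the quantity $h_ng_n$ coming directly from the definition of $g_n$ (after adding and subtracting $x_n+h_nf(t_{n+1},x_{n+1})$ and $h_nM(t_n,x_n)$ appropriately); the term $|x_n-y_n|$ from shifting the base point; and $h_nL_M(t_n)|x_n-y_n|$ from the Lipschitz continuity of $M(t_n,\cdot)$ (assumption A2)) when moving the argument of $M$ from $x_{n+1}$ back to... actually from $y_n$ to $x_n$. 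Care must be taken to use $M(t_n,\cdot)$ consistently and to keep track of which Hausdorff half-distance is needed; since we want a nearest point, we use $\dist(M(t_n,x_{n+1}),M(t_n,y_n))\le L_M(t_n)|x_{n+1}-y_n|$, but to close the recursion cleanly one rather shifts the $M$-argument from $y_n$ to $x_n$ so the error is in terms of $|x_n-y_n|$, absorbing the rest into $g_n$. This gives
\[
|x_{n+1}-y_{n+1}| \le \frac{1}{1-l_f(t_{n+1})h_n}\Bigl((1+h_nL_M(t_n))|x_n-y_n| + h_ng_n\Bigr).
\]

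Finally I would solve this linear recursion. Writing $a_n:=|x_n-y_n|$, $\alpha_n:=\frac{1+h_nL_M(t_n)}{1-l_f(t_{n+1})h_n}$, $\beta_n:=\frac{h_ng_n}{1-l_f(t_{n+1})h_n}$, we get $a_{n+1}\le\alpha_na_n+\beta_n$, hence $a_n\le\bigl(\prod_{k=0}^{n-1}\alpha_k\bigr)a_0+\sum_{k=0}^{n-1}\bigl(\prod_{j=k+1}^{n-1}\alpha_j\bigr)\beta_k$. To match the stated bound I would then estimate $\alpha_k$ from above using $\frac{1}{1-l_f(t_{k+1})h_k}=1+\frac{l_f(t_{k+1})h_k}{1-l_f(t_{k+1})h_k}$ and $1+u\le e^u$, so that $\alpha_k\le\exp\bigl(\frac{l_f(t_{k+1})h_k}{1-l_f(t_{k+1})h_k}+L_M(t_k)h_k\bigr)$ (plus a cross term $\frac{l_f(t_{k+1})h_k}{1-l_f(t_{k+1})h_k}L_M(t_k)h_k$ which is nonnegative and can be dropped in the exponent or absorbed — one should check the sign conventions, but since $l_f$ may be negative this needs a short remark, or one simply keeps the product form and notes the exponential bound). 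The $\beta_k$-factor $\frac{1}{1-l_f(t_{k+1})h_k}$ is likewise absorbed by starting the $l_f$-sum in the exponent at $j=k$ rather than $j=k+1$, which is exactly the index range appearing in \eqref{local:4}; similarly the $L_M$-sum runs from $j=k+1$ because the factor $1+h_kL_M(t_k)$ belonging to step $k$ is not present in $\beta_k$. Substituting $a_0=|x_0-y_0|$ yields \eqref{local:4}.

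The main obstacle is bookkeeping rather than depth: getting the index ranges in the two exponential sums exactly right, and handling the sign of $l_f$ (which is allowed to be negative, so care is needed when claiming $\frac{l_f(t_{k+1})h_k}{1-l_f(t_{k+1})h_k}\ge 0$ — it need not be, but the inequality $1+u\le e^u$ holds for all real $u$, so the exponential bound is still valid). A secondary point is to verify that the decomposition of $\dist(x_{n+1},y_n+h_nf(t_{n+1},x_{n+1})+h_nM(t_n,y_n))$ into $h_ng_n$ plus the $M$-Lipschitz term plus $|x_n-y_n|$ is clean; this follows from the triangle inequality for $\dist(\cdot,\cdot)$ together with $\dist(A+u,B+v)\le\dist(A,B)+|u-v|$ for sets $A,B$ and vectors $u,v$, applied with $A=\{x_{n+1}\}$, $B=h_nM(t_n,x_n)$ shifted appropriately.
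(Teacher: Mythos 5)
Your proposal is correct and follows essentially the same route as the paper's proof: the trajectory is built inductively via the defect estimate \eqref{defect:estimate:Phi}, the distance is split into $|x_n-y_n|$, the term $h_ng_n$, and the $L_M$-Lipschitz contribution (shifting the argument of $M(t_n,\cdot)$ from $y_n$ to $x_n$, exactly as the paper does), yielding the recursion $|x_{n+1}-y_{n+1}|\le\frac{1+L_M(t_n)h_n}{1-l_f(t_{n+1})h_n}|x_n-y_n|+\frac{h_n}{1-l_f(t_{n+1})h_n}g_n$, which is then solved and bounded using $\frac{1}{1-lh}=1+\frac{lh}{1-lh}$ and $1+u\le e^u$. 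Your remarks on the index bookkeeping and on the sign of $l_f$ are exactly the right points of care, and both are handled correctly.
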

\begin{proof}
Let the sequence $\{x_n\}_{n=0}^N$ be given, and assume that a trajectory $\{y_n\}_{n=0}^{\bar n}$ of 
\eqref{Euler:1} has already been constructed 
for some $\bar n<N$. By \eqref{defect:estimate:Phi}, there exists some
$y_{\bar n+1}\in\Phi(t_{\bar n},y_{\bar n};h_{\bar n})$ such that 
\begin{align*}
&|x_{{\bar n}+1}-y_{{\bar n}+1}|\\ 
&\le \frac{1}{1-l_f(t_{{\bar n}+1})h_{\bar n}}
\dist(x_{{\bar n}+1},y_{\bar n}+h_{\bar n}f(t_{{\bar n}+1},x_{{\bar n}+1})+h_{\bar n}M(t_{\bar n},y_{\bar n}))\\
&= \frac{1}{1-l_f(t_{{\bar n}+1})h_{\bar n}}
\dist(x_{{\bar n}+1}-x_{\bar n}+x_{\bar n}-y_{\bar n},h_{\bar n}f(t_{{\bar n}+1},x_{{\bar n}+1})+h_{\bar n}M(t_{\bar n},y_{\bar n}))\\
&\le \frac{1}{1-l_f(t_{{\bar n}+1})h_{\bar n}}|x_{\bar n}-y_{\bar n}| \\
&\quad + \frac{h_{\bar n}}{1-l_f(t_{{\bar n}+1})h_{\bar n}}
\dist(\frac{1}{h_{\bar n}}(x_{{\bar n}+1}-x_{\bar n}), f(t_{{\bar n}+1},x_{{\bar n}+1})+M(t_{\bar n},x_{\bar n}))\\
&\quad + \frac{h_{\bar n}}{1-l_f(t_{{\bar n}+1})h_{\bar n}}\dist(M(t_{\bar n},x_{\bar n}),M(t_{\bar n},y_{\bar n}))\\
&\le \frac{1+L_M(t_{\bar n})h_{\bar n}}{1-l_f(t_{{\bar n}+1})h_{\bar n}}|x_{\bar n}-y_{\bar n}| 
+ \frac{h_{\bar n}}{1-l_f(t_{{\bar n}+1})h_{\bar n}} g_{\bar n},
\end{align*}
so that by induction
\begin{align*}
&|x_n-y_n| \\
&\le \prod_{k=0}^{n-1}\frac{1+L_M(t_n)h_n}{1-l_f(t_{n+1})h_n}|x_0-y_0| 
+ \sum_{k=0}^{n-1}\big(\prod_{j=k+1}^{n-1} \frac{1+L_M(t_j)h_j}{1-l_f(t_{j+1})h_j}\big)\frac{h_k}{1-l_f(t_{k+1})h_k}g_k
\end{align*}
for all $n\in\{0,\ldots,N\}$.
The well-known formulas $\frac{1}{1-lh}=1+\frac{lh}{1-lh}$ for $lh<1$ 
and $1+t\le e^t$ for $t\in\R$ yield the desired statement.
\end{proof}

One-sided error estimates are a simple consequence of Proposition \ref{discrete:Filippow:1}.
\begin{corollary}
\label{convergence:Euler:1:1}
For every solution $x(\cdot)$ of the differential inclusion \eqref{ODI}, there exists a trajectory
$\{y_n\}_n$ of the semi-implicit Euler scheme \eqref{Euler:1} such that
\begin{equation}
\label{local:5}
|x(t_n)-y_n| \le \sum_{k=0}^{n-1}
e^{\sum_{j=k}^{n-1}\frac{l_f(t_{j+1})h_j}{1-l_f(t_{j+1})h_j}+\sum_{j=k+1}^{n-1}L_M(t_j)h_j}h_k\Gamma(h_k,t_k)
\end{equation}
for $n=0,\ldots,N$.
\end{corollary}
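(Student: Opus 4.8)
The plan is to apply Proposition~\ref{discrete:Filippow:1} to the sampled sequence $x_n:=x(t_n)$, $n=0,\dots,N$, where $x(\cdot)$ is the given solution of \eqref{ODI}. Since $x_0=x(0)=x_0=y_0$, the initial-error term on the right-hand side of \eqref{local:4} vanishes and what remains has exactly the structure of \eqref{local:5}, but with the defect $g_k=\dist(\tfrac1{h_k}(x(t_{k+1})-x(t_k)),\,f(t_{k+1},x(t_{k+1}))+M(t_k,x(t_k)))$ in place of $\Gamma(h_k,t_k)$. Hence the whole statement reduces to the pointwise bound $g_n\le\Gamma(h_n,t_n)$ for every $n\in\{0,\dots,N-1\}$, which I would prove as follows.

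Write $x(t_{n+1})-x(t_n)=\int_{t_n}^{t_{n+1}}\dot x(s)\,ds$ and regard $v_n:=\tfrac1{h_n}(x(t_{n+1})-x(t_n))$ as the average of the $L^1$-function $\dot x$ over $[t_n,t_{n+1}]$ with respect to the normalized Lebesgue measure $d\mu:=\tfrac1{h_n}ds$. The target set $A_n:=f(t_{n+1},x(t_{n+1}))+M(t_n,x(t_n))$ lies in $\CoCpSets(\R^d)$ because $M$ is convex- and compact-valued, so the Jensen-type estimate $\dist(\int_\Omega g\,d\mu,A)\le\int_\Omega\dist(g,A)\,d\mu$ from the preliminaries yields $g_n=\dist(v_n,A_n)\le\tfrac1{h_n}\int_{t_n}^{t_{n+1}}\dist(\dot x(s),A_n)\,ds$. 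Therefore it suffices to show $\dist(\dot x(s),A_n)\le\Gamma(h_n,t_n)$ for a.e.\ $s\in[t_n,t_{n+1}]$, since integrating this constant bound against $\mu$ reproduces $\Gamma(h_n,t_n)$.

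For the pointwise bound, fix $s\in[t_n,t_{n+1}]$ with $\dot x(s)\in F(s,x(s))$ and choose $m(s)\in M(s,x(s))$ with $\dot x(s)=f(s,x(s))+m(s)$. By Lemma~\ref{boundedness:lemma} the solution stays in $B_C(0)$, so $\|\dot x(\cdot)\|\le P$ and hence $|x(s)-x(t_j)|\le P|s-t_j|\le Ph_n$ for $j\in\{n,n+1\}$. Splitting $\dist(\dot x(s),A_n)\le|f(s,x(s))-f(t_{n+1},x(t_{n+1}))|+\dist(m(s),M(t_n,x(t_n)))$, I would bound the $f$-term by inserting $f(t_{n+1},x(s))$, giving $\tau_f(|s-t_{n+1}|)+\chi_f(|x(s)-x(t_{n+1})|)\le\tau_f(h_n)+\chi_f(Ph_n)$, and the $M$-term, using $m(s)\in M(s,x(s))$ and A2), by $\dist(M(s,x(s)),M(t_n,x(s)))+\dist_H(M(t_n,x(s)),M(t_n,x(t_n)))\le\tau_M(h_n)+L_M(t_n)Ph_n$. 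Adding these four contributions gives exactly $\Gamma(h_n,t_n)$.

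The only mildly delicate points are the measurability of $s\mapsto\dist(\dot x(s),A_n)$ — immediate, since $\dot x$ is measurable and $\dist(\cdot,A_n)$ is Lipschitz — and keeping the triangle inequalities for the one-sided distance correctly oriented; neither is a real obstacle. I expect the bookkeeping of the moduli of continuity in the defect estimate to be the main (and essentially the only) substantive step, with everything else a direct substitution into Proposition~\ref{discrete:Filippow:1}.
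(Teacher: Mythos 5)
Your proposal is correct and follows essentially the same route as the paper: apply Proposition~\ref{discrete:Filippow:1} to the sampled solution $x_n=x(t_n)$ (so the initial term vanishes) and bound the defect $g_n$ by $\Gamma(h_n,t_n)$ using the Jensen-type lemma for $\dist(\cdot,A)$ together with the bound $|x(t)-x(t')|\le Ph_n$ and the moduli of continuity. Your final step merely spells out the splitting into $\tau_f$, $\chi_f$, $\tau_M$ and $L_M P h$ that the paper leaves implicit in its last inequality.
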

\begin{proof}
The statement follows from Proposition \ref{discrete:Filippow:1} with $x(t_0)=x_0=y_0$ and 
\begin{align*}
g_n &= \dist(\frac{1}{h_n}(x(t_{n+1})-x(t_n)),f(t_{n+1},x(t_{n+1}))+M(t_n,x(t_n)))\\
&= \dist(\frac{1}{h_n}\int_{t_n}^{t_{n+1}}\dot x(t)dt,f(t_{n+1},x(t_{n+1}))+M(t_n,x(t_n)))\\
&\le \frac{1}{h_n}\int_{t_n}^{t_{n+1}}\dist(\dot x(t)dt,f(t_{n+1},x(t_{n+1}))+M(t_n,x(t_n)))dt\\
&\le \frac{1}{h_n}\int_{t_n}^{t_{n+1}}\dist(f(t,x(t))+M(t,x(t)),f(t_{n+1},x(t_{n+1}))+M(t_n,x(t_n)))dt\\
&\le \Gamma(h_n,t_n),
\end{align*}
because
\[|x(t)-x(t')| \le \int_t^{t'}|\dot x(s)|ds \le Ph_n\ \text{for all}\ t,t'\in[t_n,t_{n+1}].\]
\end{proof}

The estimate for the other semi-distance follows from the generalized Filippov Theorem \ref{continuous:filippov}.
\begin{corollary}
\label{convergence:Euler:1:2}
For every trajectory $\{y_n\}_n$ of the semi-implicit Euler scheme \eqref{Euler:1}, there exists
a solution $x(\cdot)$ of the ordinary differential inclusion \eqref{ODI} such that
\begin{equation}
\label{local:3}
|x(t_n)-y_n| \le  \int_0^{t_n} e^{\int_t^{t_n}l_f(s)+L_M(s) ds}\Gamma(|\fath|_\infty,t)dt
\end{equation}
for $n=0,\ldots,N$.
\end{corollary}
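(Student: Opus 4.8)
The plan is to pass from the discrete trajectory $\{y_n\}_n$ to a continuous-time approximate solution of \eqref{ODI}, control its defect in terms of the error term $\Gamma$, and then invoke the generalized Filippov Theorem \ref{continuous:filippov}, whose stability constant is exactly the ROSL constant $l(t)=l_f(t)+L_M(t)$ appearing in the exponent of \eqref{local:3}. Concretely, first I would define the Euler polygon $\eta:[0,T]\to\R^d$ by $\eta(t):=y_n+\tfrac{t-t_n}{h_n}(y_{n+1}-y_n)$ for $t\in[t_n,t_{n+1}]$. This $\eta$ is Lipschitz (hence absolutely continuous), satisfies $\eta(0)=y_0=x_0$ and $\eta(t_n)=y_n$, and has $\dot\eta(t)=\tfrac1{h_n}(y_{n+1}-y_n)$ on $(t_n,t_{n+1})$. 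Since $y_{n+1}\in\Phi(t_n,y_n;h_n)$, the defining inclusion \eqref{Euler:1} gives $\dot\eta(t)\in f(t_{n+1},y_{n+1})+M(t_n,y_n)$ on each subinterval. By Lemma \ref{boundedness:lemma} every $y_n$ lies in $B_C(0)$, hence so does $\eta(t)$ by convexity, and therefore $|\dot\eta(t)|\le P$, which yields $|\eta(t)-y_n|,\,|\eta(t)-y_{n+1}|\le Ph_n\le P|\fath|_\infty$ and $|t-t_n|,\,|t-t_{n+1}|\le h_n\le|\fath|_\infty$ for $t\in[t_n,t_{n+1}]$.

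The second step is the defect estimate. For a.e.\ $t\in[t_n,t_{n+1}]$, using that $\dot\eta(t)$ lies in $f(t_{n+1},y_{n+1})+M(t_n,y_n)$ and that $F(t,\eta(t))=f(t,\eta(t))+M(t,\eta(t))$, one gets
\begin{align*}
\dist(\dot\eta(t),F(t,\eta(t)))
 &\le |f(t_{n+1},y_{n+1})-f(t,\eta(t))| + \dist(M(t_n,y_n),M(t,\eta(t)))\\
 &\le \tau_f(|\fath|_\infty)+\chi_f(P|\fath|_\infty)+\tau_M(|\fath|_\infty)+L_M(t)P|\fath|_\infty\\
 &= \Gamma(|\fath|_\infty,t),
\end{align*}
where the second line comes from inserting the intermediate points $f(t_{n+1},\eta(t))$ in the $f$-term and $M(t,y_n)$ in the $M$-term, and then applying the moduli of continuity $\tau_f,\chi_f,\tau_M$ together with the $L_M(t)$-Lipschitz continuity of $M(t,\cdot)$ and the bounds from the first step. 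The point of comparing through $M(t,y_n)$ rather than $M(t_n,\eta(t))$ is to make the Lipschitz term carry the time argument $t$, so that the bound is exactly $\Gamma(|\fath|_\infty,t)$ and integrable against the Filippov kernel.

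Finally, $\eta$ is an absolutely continuous function with $\dist(\dot\eta(t),F(t,\eta(t)))\le\Gamma(|\fath|_\infty,t)$ a.e.\ and $\eta(0)=x_0$, and $F(t,\cdot)$ is $l(t)$-ROSL with $l(t)=l_f(t)+L_M(t)$; hence the generalized Filippov Theorem \ref{continuous:filippov} produces a solution $x(\cdot)$ of \eqref{ODI} with $x(0)=x_0$ and
\[
|x(t)-\eta(t)|\le\int_0^t e^{\int_s^t (l_f(\sigma)+L_M(\sigma))\,d\sigma}\,\Gamma(|\fath|_\infty,s)\,ds \qquad (t\in[0,T]).
\]
Evaluating at $t=t_n$ and using $\eta(t_n)=y_n$ gives \eqref{local:3}. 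I expect the only genuinely delicate point to be the bookkeeping in the defect estimate — in particular keeping all perturbations inside $B_C(0)$ so the moduli of continuity apply, and routing the comparison so that the $L_M$-term appears as $L_M(t)P|\fath|_\infty$ rather than $L_M(t_n)P|\fath|_\infty$; everything else reduces to the already-established Lemma \ref{boundedness:lemma} and Theorem \ref{continuous:filippov}.
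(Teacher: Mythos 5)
Your proof is correct and follows essentially the same route as the paper: the linear interpolation $\eta$ you define coincides with the paper's interpolant $y(t)=y_n+(t-t_n)(f(t_{n+1},y_{n+1})+m)$, the defect estimate via $\tau_f,\chi_f,\tau_M$ and the $L_M(t)$-Lipschitz property is identical, and the conclusion is Theorem \ref{continuous:filippov} with $l(t)=l_f(t)+L_M(t)$. The only point you gloss over --- that $\eta$ is merely piecewise $C^1$, so the Filippov theorem must strictly speaking be applied on each subinterval and the resulting estimates concatenated --- is handled by the paper in exactly the same implicit way (``piecewise and thus globally'').
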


\begin{proof}
Consider a trajectory $\{y_n\}_n$ of the parameterized semi-implicit Euler scheme and its linear interpolation
\[ y(t) = y_n + (t-t_n)f(t_{n+1},y_{n+1}) + (t-t_n)m, \quad t\in[t_n,t_{n+1}], \]
where $m\in M(t_n,y_n)$. Then
\begin{align*}
g(t) &= \dist(\dot y(t),F(t,y(t)))\\
&= \dist(f(t_{n+1},y_{n+1})+m,f(t,y(t))+M(t,y(t)))\\
&\le |f(t_{n+1},y_{n+1})-f(t,y(t))| + \dist(M(t_n,y_n),M(t,y(t)))\\
&\le \Gamma(h_n,t)
\end{align*}
for all $t\in[t_n,t_{n+1}]$. As the mapping $x\mapsto F(t,x)$ is $(l_f(t)+L_M(t))$-ROSL, 
Theorem \ref{continuous:filippov} implies piecewise and
thus globally the existence of a solution $x(\cdot)$ of the differential inclusion \eqref{ODI}
satisfying \eqref{local:3}.
\end{proof}

\begin{remark}
\begin{itemize}
\item [a)] In view of \eqref{gamma:to:zero} and Corollaries \ref{convergence:Euler:1:1} and \ref{convergence:Euler:1:2},
\[\dist_H(\ES([0,T],x_0),\ES_\Phi(\Delta_{\fath},x_0)) \rightarrow 0\ \text{as}\ |\fath|_\infty\rightarrow 0,\]
which implies convergence
\[\dist_H(\reachable(T,x_0),\reachable_\Phi(T,x_0)) \rightarrow 0\ \text{as}\ |\fath|_\infty\rightarrow 0.\]
Moreover, error estimates \eqref{local:4} and \eqref{local:3} allow to exploit negative ROSL-constants
of the single-valued part $f$, but not of $M$, which is due to the semi-implicit construction of the scheme.
\item [b)] If $l_f(\cdot)\equiv l_f$, $L_M(\cdot)\equiv L_M$, and $h_n\equiv h$ are constant and 
$(t,x)\mapsto f(t,x)$ and $(t,x)\mapsto M(t,x)$ are $L$-Lipschitz, then estimate \eqref{local:5} simplifies to
\[|x(t_n)-y_n| \le 2L(1+P)he^{(\frac{l_f}{1-l_fh}+L_M)t_n}\]
and \eqref{local:3} becomes
\[|x(t_n)-y_n| \le \frac{2L(1+P)h}{l_f+L_M}(e^{(l_f+L_M)t_n}-1)\]
for every $n\in\{0,\ldots,N\}$.
\end{itemize}
\end{remark}

\subsection{Spatial discretization}
\label{single:step:discretization}

The parameterized semi-implicit Euler scheme analyzed in Sections \ref{single:step:properties} 
and \ref{single:step:dynamics} is discrete in time, but not in space. For a practical implementation, 
it is inevitable to introduce a spatial discretization, which is simple for explicit numerical schemes 
(see \cite{Beyn:Rieger:07}) but causes problems in the case of the fully implicit Euler scheme 
(see \cite{Beyn:Rieger:10}). The aim of this Section is to show that the parameterized semi-implicit 
Euler scheme can be discretized in a straight-forward way without losing its favorable properties.

\medskip

To this end, consider the grid $\Delta_\rho:=\rho\Z^d\subset\R^d$ for some $\rho>0$. The projection 
$P_\rho:\powerset(\R^d)\rightarrow\powerset(\Delta_\rho)$ from the subsets of $\R^d$ to the subsets
of $\Delta_\rho$
is defined by $P_\rho(A):=B_{\frac{\sqrt{d}}{2}\rho}(A)\cap\Delta_\rho$. 

\begin{lemma}
If $A\subset\R^d$ is nonempty, then
\[\dist_H(A,P_\rho(A)) \le \frac{\sqrt{d}}{2}\rho.\]
In particular, the set $P_\rho(A)$ is nonempty.
\end{lemma}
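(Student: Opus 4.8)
The plan is to bound the two one-sided Hausdorff distances separately, since one of them is immediate from the definition of $P_\rho$ and the other simultaneously produces the nonemptiness claim. Recall $P_\rho(A)=B_{\frac{\sqrt d}{2}\rho}(A)\cap\Delta_\rho$, so the whole statement is really a sharp quantification of the fact that the grid $\rho\Z^d$ is $\tfrac{\sqrt d}{2}\rho$-dense in $\R^d$.

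First I would dispose of $\dist(P_\rho(A),A)$: by definition $P_\rho(A)\subset B_{\frac{\sqrt d}{2}\rho}(A)$, hence every $p\in P_\rho(A)$ satisfies $\dist(p,A)\le\frac{\sqrt d}{2}\rho$ directly from the definition of $B_r(A)$, and taking the supremum over $p$ gives $\dist(P_\rho(A),A)\le\frac{\sqrt d}{2}\rho$. Next, for the other semi-distance and for nonemptiness, I would use coordinate-wise rounding: given $a=(a_1,\dots,a_d)\in A$, pick $p=(p_1,\dots,p_d)\in\rho\Z^d$ with each $p_i$ a nearest integer multiple of $\rho$ to $a_i$, so that $|a_i-p_i|\le\rho/2$ for every $i$ and therefore $|a-p|=\big(\sum_i|a_i-p_i|^2\big)^{1/2}\le\frac{\sqrt d}{2}\rho$. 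This forces $\dist(p,A)\le|p-a|\le\frac{\sqrt d}{2}\rho$, so $p\in B_{\frac{\sqrt d}{2}\rho}(A)\cap\Delta_\rho=P_\rho(A)$; since $A\neq\emptyset$, this already shows $P_\rho(A)\neq\emptyset$. It also gives $\inf_{q\in P_\rho(A)}|a-q|\le\frac{\sqrt d}{2}\rho$ for every $a\in A$, whence $\dist(A,P_\rho(A))\le\frac{\sqrt d}{2}\rho$. Combining the two one-sided bounds yields $\dist_H(A,P_\rho(A))\le\frac{\sqrt d}{2}\rho$.

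There is no real obstacle here; the only point worth stating carefully is that the rounded point $p$ lies in $P_\rho(A)$ and not merely in $\Delta_\rho$, which holds precisely because the radius $\frac{\sqrt d}{2}\rho$ built into the definition of $P_\rho$ equals the worst-case distance from a point of a cube of side length $\rho$ to the nearest vertex.
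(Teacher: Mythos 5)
Your proof is correct and follows essentially the same route as the paper: coordinate-wise rounding to the nearest multiple of $\rho$ gives the bound $\dist(A,P_\rho(A))\le\frac{\sqrt d}{2}\rho$ together with nonemptiness, while the inclusion $P_\rho(A)\subset B_{\frac{\sqrt d}{2}\rho}(A)$ handles the reverse semi-distance.
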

\begin{proof}
Let $\round:\R\rightarrow\Z$ be the usual rounding function. 
If $x\in A$, then the element $x^\rho\in\Delta_\rho$ specified by
\[x^\rho_n := \rho\cdot\round(\frac{x_n}{\rho}),\quad n=1,\ldots,d,\]
satisfies
\[|x-x^\rho| \le \sqrt{(\frac{\rho}{2})^2+\ldots+(\frac{\rho}{2})^2} = \frac{\sqrt{d}}{2}\rho,\]
so that $x^\rho\in B_{\frac{\sqrt{d}}{2}\rho}(A)\cap\Delta_\rho=P_\rho(A)$. As $x\in A$ was arbitrary,
\[\dist(A,P_\rho(A)) \le \frac{\sqrt{d}}{2}\rho.\]
Since $P_\rho(A)\subset B_{\frac{\sqrt{d}}{2}\rho}(A)$, it is clear that $\dist(P_\rho(A),A)\le\frac{\sqrt{d}}{2}\rho$.
\end{proof}

According to Lemma \ref{representation:lemma}, the image $\Phi(t,x;h)$ of the parameterized semi-implicit Euler scheme 
is a union \eqref{implicit:union} of solutions of implicit equations \eqref{implicit:equation} parameterized over
$M(t,x)$.
It is therefore natural to discretize the parameter set $M(t,x)$, solve the corresponding implicit
equations, and map the results to the spatial grid, which leads to the definition
\[\hat\Phi(t,x;h,\rho,\epsilon):=P_\rho(\{z\in x+hf(t+h,z)+hP_\epsilon(M(t,x))\}),\]
where $\rho>0$ and $\epsilon>0$ are the mesh sizes of grids in state and velocity space.
Set $\fatrho:=(\rho_0,\ldots,\rho_N)$ and $\fateps:=(\epsilon_0,\ldots,\epsilon_{N-1})$.

\begin{definition}
A sequence $\{\hat y_n\}_{n=0}^N\subset\R^d$ is called a trajectory of the fully discretized
parameterized semi-implicit Euler scheme $\hat\Phi$ associated with \eqref{ODI}
if 
\[\hat y_{n+1} \in \hat\Phi(t_n,\hat y_n;h_n,\rho_{n+1},\epsilon_n)\ \text{for}\ n=0,\ldots,N-1,
\quad \hat y_0\in P_{\rho_0}(\{x_0\}).\]
The set of all such trajectories is denoted $\ES_{\hat\Phi}(\Delta_{\fath},x_0)$.
Its dependence on $\fatrho$ and $\fateps$ is suppressed for the sake of readability.
\end{definition}

\begin{definition}
For any $t_n\in\Delta_{\fath}$, the reachable set $\reachable_{\hat\Phi}(t_n,x_0)$ of the fully discretized
semi-implicit Euler scheme is given by
\[\reachable_{\hat\Phi}(t_n,x_0)
:=\{\hat y_n\in\R^d: \{\hat y_n\}_{n=0}^N\in\ES_{\hat\Phi}(\Delta_{\fath},x_0)\}.\]
\end{definition}

\begin{proposition}
\label{discretized:convergence:1}
For every trajectory $\{y_n\}_{n=0}^N \in \ES_\Phi(\Delta_{\fath},x_0)$, there exists a trajectory 
$\{\hat y_n\}_{n=0}^N \in \ES_{\hat\Phi}(\Delta_{\fath},x_0)$ such that
\begin{align}
\label{perturbation:spatial:1}
|y_n-\hat y_n| &\le \frac{\sqrt{d}}{2}\rho_0\prod_{k=0}^{n-1}\frac{1+L_M(t_k)h_k}{1-l_f(t_{k+1})h_k} \\
&+ \frac{\sqrt{d}}{2}\sum_{k=0}^{n-1}\left(\prod_{j=k+1}^{n-1}\frac{1+L_M(t_j)h_j}{1-l_f(t_{j+1})h_j}\right)
(\rho_{k+1}+\frac{\epsilon_k h_k}{1-l_f(t_{k+1})h_k}) \nonumber
\end{align}
for all $n=0,\ldots,N$, and for every trajectory $\{\hat y_n\}_{n=0}^N \in \ES_{\hat\Phi}(\Delta_{\fath},x_0)$, 
there exists a trajectory $\{y_n\}_{n=0}^N \in \ES_\Phi(\Delta_{\fath},x_0)$
satisfying estimate \eqref{perturbation:spatial:1}.
\end{proposition}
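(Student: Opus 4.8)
The plan is to mimic the construction in Proposition~\ref{discrete:Filippow:1}, building the approximating trajectory step by step and propagating a per-step error estimate by induction. Both halves of the statement are symmetric, so I would carry out the construction in one direction (say, given $\{y_n\}\in\ES_\Phi$, produce $\{\hat y_n\}\in\ES_{\hat\Phi}$) and remark that the reverse direction is identical because all the estimates used are two-sided.

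For the induction, suppose $\{\hat y_k\}_{k=0}^{\bar n}$ has already been built. The single step from $\hat y_{\bar n}$ proceeds through two sources of error: first, replacing the exact parameter set $M(t_{\bar n},\hat y_{\bar n})$ by its grid image $P_{\epsilon_{\bar n}}(M(t_{\bar n},\hat y_{\bar n}))$; second, projecting the resulting point onto the spatial grid $\Delta_{\rho_{\bar n+1}}$. First I would pick $y_{\bar n+1}\in\Phi(t_{\bar n},y_{\bar n};h_{\bar n})$; by Lemma~\ref{representation:lemma} it equals $z(m)$ for some $m\in M(t_{\bar n},y_{\bar n})$. I would then locate $m'\in M(t_{\bar n},\hat y_{\bar n})$ within $L_M(t_{\bar n})|y_{\bar n}-\hat y_{\bar n}|$ of $m$ (Lipschitz continuity of $M$), then a grid point $m''\in P_{\epsilon_{\bar n}}(M(t_{\bar n},\hat y_{\bar n}))$ within $\tfrac{\sqrt d}{2}\epsilon_{\bar n}$ of $m'$, solve \eqref{implicit:equation} with base point $\hat y_{\bar n}$ and parameter $m''$ to get a point $\tilde z$, and finally set $\hat y_{\bar n+1}$ to be the grid-projection of $\tilde z$ in $P_{\rho_{\bar n+1}}(\cdot)$, which costs another $\tfrac{\sqrt d}{2}\rho_{\bar n+1}$. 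Using the $\tfrac{h}{1-l_f(t+h)h}$-Lipschitz dependence of the solution map on the parameter (Lemma~\ref{representation:lemma}) and the $\tfrac{1+hL_M(t)}{1-l_f(t+h)h}$-Lipschitz dependence of $\Phi$ on the base point (Theorem~\ref{Euler:1:continuous}, applied to the single-valued equation), the one-step recursion comes out as
\[
|y_{\bar n+1}-\hat y_{\bar n+1}| \le \frac{1+L_M(t_{\bar n})h_{\bar n}}{1-l_f(t_{\bar n+1})h_{\bar n}}|y_{\bar n}-\hat y_{\bar n}| + \frac{\sqrt d}{2}\rho_{\bar n+1} + \frac{\sqrt d}{2}\frac{\epsilon_{\bar n}h_{\bar n}}{1-l_f(t_{\bar n+1})h_{\bar n}}.
\]
Unrolling this linear recursion, with initial error $|y_0-\hat y_0|\le\tfrac{\sqrt d}{2}\rho_0$ coming from $\hat y_0\in P_{\rho_0}(\{x_0\})$, gives exactly \eqref{perturbation:spatial:1}.

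The main obstacle is bookkeeping rather than any deep difficulty: one must be careful that the grid point $m''$ actually lies in $P_{\epsilon_{\bar n}}(M(t_{\bar n},\hat y_{\bar n}))$ — the correct parameter set for the scheme $\hat\Phi$ evaluated at $\hat y_{\bar n}$, not at $y_{\bar n}$ — so the triangle inequality for the parameters must route through $m\mapsto m'\in M(t_{\bar n},\hat y_{\bar n})\mapsto m''$ in that order, and the two solution maps being compared must share the same base point when the Lipschitz-in-parameter estimate is invoked, with the change of base point absorbed separately by the Lipschitz-in-$x$ estimate. Once the indices are lined up, the constants telescope cleanly and the reverse direction follows by interchanging the roles of $y$ and $\hat y$ at every step, since Lemma~\ref{representation:lemma}, Theorem~\ref{Euler:1:continuous}, and the grid estimate $\dist_H(A,P_\rho(A))\le\tfrac{\sqrt d}{2}\rho$ are all symmetric.
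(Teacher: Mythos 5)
Your proposal is correct and follows essentially the same route as the paper: induction on $n$, with the one-step error split into the base-point change (absorbed by the $\frac{1+L_M(t_n)h_n}{1-l_f(t_{n+1})h_n}$-Lipschitz estimate), the parameter-grid projection (costing $\frac{\sqrt d}{2}\frac{\epsilon_n h_n}{1-l_f(t_{n+1})h_n}$ via the Lipschitz-in-parameter bound of Lemma \ref{representation:lemma}), and the spatial projection (costing $\frac{\sqrt d}{2}\rho_{n+1}$), then unrolled. The only cosmetic difference is that you track the parameter chain $m\mapsto m'\mapsto m''$ explicitly where the paper first invokes the Lipschitz continuity of $\Phi$ to jump to a point of $\Phi(t_n,\hat y_n;h_n)$ and only then reads off its parameter in $M(t_n,\hat y_n)$; the constants are identical.
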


\begin{proof}
The statement of the proposition follows by induction.
Let $\{y_n\}_{n=0}^N\in\ES_\Phi(\Delta_{\fath})$ be an arbitrary trajectory.
For $n=0$, take $\hat y_0\in P_{\rho_0}(y_0)$ such that
\[|y_0-\hat y_0| = \dist(\{x_0\},P_{\rho_0}(\{x_0\})) \le \frac{\sqrt{d}}{2}\rho_0.\]
Now assume that a trajectory $\{\hat y_k\}_{k=0}^n$, $n\in\{0,\ldots,N-1\}$, 
of \eqref{Euler:1} has been constructed that satisfies \eqref{perturbation:spatial:1}.
By Theorem \ref{properties:Phi}, there exists some $y\in\Phi(t_n,\hat y_n;h_n)$ with 
\[|y_{n+1}-y| \le \tfrac{1+L_M(t_n)h_n}{1-l_f(t_{n+1})h_n}|y_n-\hat y_n|.\]
By definition, there exists some $m\in M(t_n,\hat y_n)$ such that
\[ y \in \hat y_n + h_n f(t_{n+1},y) + h_nm,\]
and by the properties of the projection $P_{\epsilon_n}$, there exists a vector 
$\hat m\in P_{\epsilon_n}(M(t_n,\hat y_n))$ with $|m-\hat m|\le\frac{\sqrt{d}}{2}\epsilon_n$.
Let $z\in\R^d$ be the unique solution of the implicit equation
\[ z=\hat y_n+h_nf(t_{n+1},z)+h_n\hat m\]
and select some arbitrary $\hat y_{n+1}\in P_{\rho_{n+1}}(z)$.
Then $\hat y_{n+1}\in\hat\Phi(t_n,\hat y_n;h_n,\rho_{n+1},\epsilon_n)$, and it follows from
Lemma \ref{representation:lemma} that
\begin{align*}
&|y_{n+1}-\hat y_{n+1}| \le |y_{n+1}-y|+|y-z|+|z-\hat y_{n+1}|\\
&\le \tfrac{1+L_M(t_n)h_n}{1-l_f(t_{n+1})h_n}|y_n-\hat y_n| + \tfrac{\tfrac{\sqrt{d}}{2}\epsilon_n h_n}{1-l_f(t_{n+1})h_n} 
+ \tfrac{\sqrt{d}}{2}\rho_{n+1}\\
&\le \tfrac{\sqrt{d}}{2}\rho_0\prod_{k=0}^{n}\tfrac{1+L_M(t_k)h_k}{1-l_f(t_{k+1})h_k}
+ \tfrac{\sqrt{d}}{2}\sum_{k=0}^{n}\left(\prod_{j=k+1}^{n}\tfrac{1+L_M(t_j)h_j}{1-l_f(t_{j+1})h_j}\right)
(\rho_{k+1}+\tfrac{\epsilon_k h_k}{1-l_f(t_{k+1})h_k}),
\end{align*}
so that the sequence $\{\hat y_k\}_{k=0}^{n+1}$ satisfies \eqref{perturbation:spatial:1}.

The other direction can be shown with the same arguments.
\end{proof}

\begin{remark}
\label{discretization:remark}
\begin{itemize}
\item [a)] In view of Proposition \ref{discretized:convergence:1},
\[\dist_H(\ES_\Phi(\Delta_{\fath},x_0),\ES_{\hat\Phi}(\Delta_{\fath},x_0)) \rightarrow 0\]
when $\epsilon\rightarrow 0$ and $\rho=o(h)$.
All error estimates allow to exploit negative OSL-constants of $f$.
\item [b)] If $l_f(\cdot)\equiv l_f$, $L_M(\cdot)\equiv L_M$, $h$ and $\rho$ are constant, then estimate 
\eqref{perturbation:spatial:1} changes to
\begin{align*}
&\dist_H(\ES_\Phi(0,x_0,t_n),\ES_{\hat\Phi}(0,P_\rho(\{x_0\}),t_n)) \\
&\le \frac{\sqrt{d}}{2}\rho e^{\frac{l_ft_n}{1-l_fh}+L_Mt_n}
+ \frac{\sqrt{d}}{2}\frac{e^{L_Mt_n+\frac{l_ft_n}{1-l_fh}}-1}
{L_M+\frac{l_f}{1-l_fh}+L_M\frac{l_f}{1-l_fh}h}(\frac{\rho}{h}+\frac{\epsilon}{1-l_fh})
\end{align*}
for $n=0,\ldots,N$. 

The term $\frac{\sqrt{d}}{2}\rho\exp(\frac{l_ft_n}{1-l_fh}+L_Mt_n)$ originates from the projection of the initial value $x_0$
to the grid $\Delta_\rho$. If the grid is centered in $x_0$ instead of the origin, this error term vanishes.
This is impossible if the initial value is in fact an initial set that must be discretized. 
As discussed in \cite{Beyn:Rieger:07}, it is reasonable to choose $\rho=h^2$ and $\epsilon=h$ in order to 
obtain first-order convergence.
\item[c)] An implementation of the fully discretized scheme $\hat \Phi$ is much easier than that
of the implicit Euler scheme presented in \cite{Beyn:Rieger:10}, where explicit knowledge of the 
one-sided Lipschitz constant and the moduli of continuity of the right-hand side are required not
only for the error estimates, but for the practical computations.
Underestimation of these parameters can lead to divergence or failure of the implicit Euler scheme, 
whereas overestimation implies pessimistic estimates and smaller step-sizes than necessary
as usual. The semi-implicit scheme $\hat\Phi$ does not have any such flaws.
\end{itemize}
\end{remark}

\section{The semi-implicit split scheme}

In this section, a simpler version of the semi-implicit Euler scheme will be investigated.
One step of this method is the multivalued mapping $\Psi:D\rightarrow\powerset(\R^d)$
given by
\begin{subequations}
\begin{equation}
\label{direct:1}
\Psi(t,x;h):=z+hM(t,x),
\end{equation}
where $D$ is the domain defined in Section \ref{single:step:properties} and $z\in\R^d$ is the unique solution of the implicit equation 
\begin{equation}
\label{direct:2}
z=x+hf(t+h,z).
\end{equation}
\end{subequations}
The obvious advantage of this scheme is that only one implicit equation must be solved in every step instead 
of a whole family of such equations. This is achieved by fully separating the problem of solving implicit
equations and treating the set-valued part $M$. 

\medskip

The discussion of the semi-implicit split scheme will be concise. On one hand, the 
techniques are similar to those used in Section \ref{parametrized:sec}. On the other hand,
useful statements about the solution of implicit equation \eqref{direct:2}, which is in fact
the defining equation of the classical implicit Euler scheme, can be deduced from the results
in Section \ref{parametrized:sec} applied to the right-hand side $F(t,x)=\{f(t,x)\}$.

\medskip

By Lemma \ref{representation:lemma}, equation \eqref{direct:2} admits a unique solution $z$
when $(t,x,h)\in D$, and by Theorem \ref{Euler:1:continuous} and Assumption A2), the mapping
$(t,x,h)\mapsto\Psi(t,x;h)$ is jointly continuous and $x\mapsto\Psi(t,x;h)$ is Lipschitz 
continuous with constant $\frac{1}{1-l_f(t+h)h}+L_Mh$ on $D$. Convexity of the images $\Psi(t,x;h)$
is evident.

\medskip

Assumption A3) guarantees that the iterates of the numerical scheme are well-defined.
\begin{definition}
A sequence $\{y_n\}_{n=0}^{N}\subset\R^d$ is called a trajectory of the 
semi-implicit split scheme $\Psi$ associated with \eqref{ODI} if 
\begin{equation}
\label{def:trajectory:2}
y_{n+1} \in \Psi(t_n,y_n;h_n)\ \text{for}\ n=0,\ldots,N-1,\quad y_0=x_0.
\end{equation}
The set of all such trajectories is denoted by $\ES_\Psi(\Delta_{\fath},x_0)$.
\end{definition}

\begin{definition}
For any $t_n\in\Delta_{\fath}$, the reachable set $\reachable_\Psi$ of the semi-implicit split
scheme $\Psi$ is given by
\begin{equation}
\label{def:reachable:2}
\reachable_\Psi(t_n,x_0)=\{y_n: \{y_k\}_{k=0}^N\in\ES_\Psi(\Delta_{\fath},x_0)\}.
\end{equation}
\end{definition}
By the above, the reachable set $\reachable_\Psi(t_n,x_0)$ depends continuously on $(\fath,x_0)$ and is 
Lipschitz continuous in the initial value, because it is a composition of multimaps with this property.

\medskip

Boundedness of trajectories can be shown as in Lemma \ref{boundedness:lemma},
so that the moduli of continuity $\tau_f$, $\chi_f$, and $\tau_M$ and the 
constant $P$ are well-defined. 

\begin{proposition}
\label{convergence:Euler:2:1}
For any solution $x(\cdot)\in\ES([0,T],x_0)$, there exists a trajectory $\{y_n\}_{n=0}^N\in\ES_\Psi(\Delta_{\fath},x_0)$
such that
\begin{equation}
\label{local:8}
|x(t_n)-y_n| \le \sum_{k=0}^{n-1}(\prod_{j=k+1}^{n-1}(\frac{1}{1-l_f(t_{j+1})h_j}+L_M(t_j)h_j))
h_k\tilde\Gamma(h_k,t_k)
\end{equation}
for $n=0,\ldots,N$, where
\begin{align*}
\tilde \Gamma(h_k,t_k) &:=\frac{1}{1-l_f(t_{k+1})h_k}(\tau_f(h_k)+\chi_f(Ph_k))\\
& \quad +\chi_f(\int_{t_k}^{t_{k+1}}e^{\int_s^{t_{k+1}}l(u)du}P\,ds)+\tau_M(h_k)+L_M(t_k)Ph_k.
\end{align*}
\end{proposition}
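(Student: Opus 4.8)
The plan is to mimic the structure of Proposition~\ref{discrete:Filippow:1} and Corollary~\ref{convergence:Euler:1:1}, but to exploit the very simple shape of the images of $\Psi$: one implicit Euler step for $f$, followed by inflation with $h\,M(t,x)$. This makes the per-step error split into a propagated part, amplified by the Lipschitz constant of $x\mapsto\Psi(t_n,x;h_n)$, and a purely local consistency part.

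First I would reduce the statement to a one-step consistency bound. Put $y_0:=x_0$ and construct $\{y_n\}$ inductively. Given $y_n$, Assumption~A3) ensures $(t_n,y_n,h_n)\in D$, so \eqref{direct:2} has a unique solution by Lemma~\ref{representation:lemma} and $\Psi(t_n,y_n;h_n)$ is a nonempty compact set; pick $y_{n+1}\in\Psi(t_n,y_n;h_n)$ with $|x(t_{n+1})-y_{n+1}|=\dist(x(t_{n+1}),\Psi(t_n,y_n;h_n))$. Since $\dist(p,A)\le\dist(p,B)+\dist_H(B,A)$ for arbitrary $p$ and since $x\mapsto\Psi(t,x;h)$ is Lipschitz with constant $\tfrac{1}{1-l_f(t+h)h}+L_M(t)h$ (as recorded before the statement; this is Theorem~\ref{Euler:1:continuous} together with Assumption~A2) specialized to $\Psi$), one obtains the scalar recursion $|x(t_{n+1})-y_{n+1}|\le a_n\,|x(t_n)-y_n|+d_n$, where $a_n:=\tfrac{1}{1-l_f(t_{n+1})h_n}+L_M(t_n)h_n$ and $d_n:=\dist(x(t_{n+1}),\Psi(t_n,x(t_n);h_n))$. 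Unrolling this recursion with $y_0=x_0$ reproduces the right-hand side of \eqref{local:8} verbatim, as soon as $d_n\le h_n\tilde\Gamma(h_n,t_n)$; no exponential estimate is needed here, because \eqref{local:8} is already written in terms of products.

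Next I would establish the consistency bound $d_n\le h_n\tilde\Gamma(h_n,t_n)$. Write $\Psi(t_n,x(t_n);h_n)=z_n+h_nM(t_n,x(t_n))$ with $z_n$ solving $z=x(t_n)+h_nf(t_{n+1},z)$, choose a measurable selection $w(\cdot)$ with $\dot x(s)=f(s,x(s))+w(s)$ and $w(s)\in M(s,x(s))$ a.e., and set $m_n:=\tfrac{1}{h_n}\int_{t_n}^{t_{n+1}}w(s)\,ds$. A direct computation gives $x(t_{n+1})-z_n-h_nm_n=\int_{t_n}^{t_{n+1}}\big(f(s,x(s))-f(t_{n+1},z_n)\big)\,ds$, hence $d_n\le\big|\int_{t_n}^{t_{n+1}}\big(f(s,x(s))-f(t_{n+1},z_n)\big)\,ds\big|+h_n\,\dist(m_n,M(t_n,x(t_n)))$. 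For the set-valued term, the integral estimate from Section~2 ($\dist(\int g\,d\mu,A)\le\int\dist(g,A)\,d\mu$ applied with $A=M(t_n,x(t_n))$), Assumption~A2) and $|x(s)-x(t_n)|\le Ph_n$ yield $\dist(m_n,M(t_n,x(t_n)))\le\tau_M(h_n)+L_M(t_n)Ph_n$, the last two summands of $\tilde\Gamma$. For the single-valued term, I would introduce the reference arc $\zeta(\cdot)$ with $\dot\zeta=f(\cdot,\zeta)$, $\zeta(t_n)=x(t_n)$: then $z_n$ is exactly one implicit Euler step for $\zeta$, so the one-sided Lipschitz computation used in Lemma~\ref{representation:lemma} and Theorem~\ref{Euler:1:continuous} gives $|z_n-\zeta(t_{n+1})|\le\tfrac{h_n}{1-l_f(t_{n+1})h_n}\big(\tau_f(h_n)+\chi_f(Ph_n)\big)$, whereas a forward Gronwall estimate on $[t_n,t_{n+1}]$ for $x$ against $\zeta$ (OSL of $f$ and $|w|\le P$) bounds $|x(s)-\zeta(s)|$, and hence, by monotonicity of $\chi_f$ and $l\ge l_f$, bounds $\chi_f(|x(s)-\zeta(s)|)$ by $\chi_f\big(\int_{t_n}^{t_{n+1}}e^{\int_\sigma^{t_{n+1}}l(u)\,du}P\,d\sigma\big)$. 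Splitting $f(s,x(s))-f(t_{n+1},z_n)$ through $f(s,\zeta(s))$ and $f(t_{n+1},\zeta(t_{n+1}))$ and invoking $\tau_f$ and $\chi_f$ then assembles the remaining two summands of $\tilde\Gamma(h_n,t_n)$.

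The main obstacle is the bookkeeping in this last step: one must route the single-valued error through the auxiliary implicit Euler reference $\zeta$ so that the factor $\tfrac{1}{1-l_f(t_{n+1})h_n}$ is attached to the $f$-sampling error alone and never multiplies $|x(t_n)-y_n|$, and one must reconcile the subinterval Gronwall bound for $|x(s)-\zeta(s)|$ with the constant $l=l_f+L_M$ and with the precise argument of $\chi_f$ occurring in $\tilde\Gamma$. Everything else — the measurable selection of $w$, the uniform bound $P$ (available by the analogue of Lemma~\ref{boundedness:lemma} mentioned before the statement), and the unrolling of the scalar recursion — is routine and parallels Section~\ref{single:step:dynamics}.
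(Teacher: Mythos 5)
Your proposal follows essentially the same route as the paper: a one-step recursion driven by the Lipschitz constant $\tfrac{1}{1-l_f(t_{n+1})h_n}+L_M(t_n)h_n$ of $x\mapsto\Psi(t_n,x;h_n)$ plus a local defect $d_n=\dist(x(t_{n+1}),\Psi(t_n,x(t_n);h_n))$; the defect is then bounded by introducing exactly the paper's auxiliary arc ($\dot e=f(\cdot,e)$, $e(t_n)=x(t_n)$, your $\zeta$), the Donchev--Farkhi stability estimate for $|x-\zeta|$, the solvability theorem for the implicit step, and the Jensen-type lemma plus projection for the $M$-part. The identity $x(t_{n+1})-z_n-h_nm_n=\int_{t_n}^{t_{n+1}}\bigl(f(s,x(s))-f(t_{n+1},z_n)\bigr)\,ds$ and all the individual bounds you list are correct and match the paper's.

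The one step that does not work as written is the final assembly, where you ``split $f(s,x(s))-f(t_{n+1},z_n)$ through $f(s,\zeta(s))$ and $f(t_{n+1},\zeta(t_{n+1}))$.'' Done pointwise, this produces $h_n\bigl(\tau_f(h_n)+\chi_f(Ph_n)\bigr)$ \emph{without} the factor $\tfrac{1}{1-l_f(t_{n+1})h_n}$ (which is a genuine loss when $l_f<0$, i.e.\ precisely in the stiff regime the scheme is designed for), plus a spurious third leg $h_n\,\chi_f\bigl(|\zeta(t_{n+1})-z_n|\bigr)$ that has no counterpart in $\tilde\Gamma$. The correct move is not a triangle inequality on $f$-values but a regrouping of the integral identity: since $\zeta(t_{n+1})=x(t_n)+\int_{t_n}^{t_{n+1}}f(s,\zeta(s))\,ds$ and $z_n=x(t_n)+h_nf(t_{n+1},z_n)$, one has $\int_{t_n}^{t_{n+1}}f(s,x(s))\,ds-h_nf(t_{n+1},z_n)=\bigl(\zeta(t_{n+1})-z_n\bigr)+\int_{t_n}^{t_{n+1}}\bigl(f(s,x(s))-f(s,\zeta(s))\bigr)\,ds$, and the block $\zeta(t_{n+1})-z_n$ is then estimated in norm by the bound $\tfrac{h_n}{1-l_f(t_{n+1})h_n}\bigl(\tau_f(h_n)+\chi_f(Ph_n)\bigr)$ that you already derived from Theorem~\ref{solvability:theorem}. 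With that regrouping your argument reproduces $\tilde\Gamma$ exactly and coincides with the paper's proof; without it, the claimed inequality \eqref{local:8} is not obtained.
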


\begin{proof}
The statement is obtained by induction. It is trivially satisfied for $n=0$.
Let $x(\cdot)$ be an arbitrary solution of \eqref{ODI}, and assume that a trajectory $\{y_k\}_{k=0}^{n}$ 
satisfying \eqref{local:8} has been constructed.
By definition of a solution, 
\[\dot x(t)=f(t,x(t))+m(t,x(t)),\quad x(0)=x_0\]
with $m(t,x(t))\in M(t,x(t))$.
Consider the solution $e(\cdot)$ of the auxiliary problem 
\[\dot e(t)=f(t,e(t)),\quad e(t_n)=x(t_n).\]
The estimate given in the proof of Theorem 3.2 in \cite{Donchev:Farkhi:98} with $F(t,x):=f(t,x)$ and
$s(t):=|x(t)-e(t)|$ yields 
\begin{align*}
|x(t_{n+1})-e(t_{n+1})| &\le \int_{t_n}^{t_{n+1}}e^{\int_s^{t_{n+1}}l(u)du}|m(s,x(s))|ds \\
&\le \int_{t_n}^{t_{n+1}}e^{\int_s^{t_{n+1}}l(u)du}Pds.
\end{align*}
Define $m:=\Proj(\frac{1}{h_n}\int_{t_n}^{t_{n+1}}m(s,x(s))ds,M(t_n,x(t_n)))$. Then $y:=z+h_nm\in\Psi(t_n,x(t_n),h_n)$,
where $z$ is the unique solution of 
\begin{equation}
\label{local:9}
0=x(t_n)+h_nf(t_{n+1},z)-z.
\end{equation}
By Theorem \ref{solvability:theorem} applied to \eqref{local:9} with initial guess $e(t_{n+1})$,
\begin{align*}
|e(t_{n+1})-z| &\le \frac{1}{1-l_f(t_{n+1})h_n}|x(t_n)+h_nf(t_{n+1},e(t_{n+1}))-e(t_{n+1})|\\
&\le \frac{1}{1-l_f(t_{n+1})h_n} \int_{t_n}^{t_{n+1}}|f(t_{n+1},e(t_{n+1}))-f(s,e(s))|ds\\
&\le \frac{h_n}{1-l_f(t_{n+1})h_n} (\tau_f(h_n) + \chi_f(Ph_n)),
\end{align*}
and
\begin{align*}
&|\int_{t_n}^{t_{n+1}}m(s,x(s))ds-h_nm| = \dist(\int_{t_n}^{t_{n+1}}m(s,x(s))ds,h_nM(t_n,x(t_n)))\\
&\le \int_{t_n}^{t_{n+1}}\dist(M(s,x(s)),M(t_n,x(t_n)))ds \le h_n(\tau_M(h_n)+L_M(t_n)Ph_n),
\end{align*}
so that
\begin{align*}
|x(t_{n+1})-y| &= |[x(t_{n})+\int_{t_n}^{t_{n+1}}f(s,x(s))+m(s,x(s))ds]-[z+h_nm]|\\
&\le |x(t_{n})+\int_{t_n}^{t_{n+1}}f(s,e(s))ds - z| + \int_{t_n}^{t_{n+1}}|f(s,x(s))-f(s,e(s))|ds \\
&\quad + |\int_{t_n}^{t_{n+1}}m(s,x(s))ds - h_nm|\ \le\ \tilde\Gamma(h_n,t_n),
\end{align*}
As the scheme $\Psi$ is Lipschitz, there exists an element $y_{n+1}\in\Psi(t_n,y_n;h_n)$ such that
\begin{align*}
|x(t_{n+1})-y_{n+1}| &\le |x(t_{n+1})-y| + |y-y_{n+1}| \\
&\le (\frac{1}{1-l_f(t_{n+1})h_n}+L_M(t_n)h_n)|x(t_n)-y_n| + \tilde\Gamma(h_n,t_n),
\end{align*}
and hence $y_{n+1}$ satisfies \eqref{local:8}.
\end{proof}

The convergence proof for the other semi-distance proceeds along the same lines as that of Corollary 
\ref{convergence:Euler:1:2}.

\begin{proposition}
\label{convergence:Euler:2:2}
For every trajectory $\{y_n\}_n\in\ES_\Psi(\Delta_{\fath},x_0)$, there exists
a solution $x(\cdot)\in\ES([0,T],x_0)$  such that
\begin{equation}
\label{local:10}
|x(t_n)-y_n| \le  \int_0^{t_n} e^{\int_t^{t_n}l_f(s)+L_M(s) ds}\Gamma(|\fath|_\infty,t)dt
\end{equation}
for $n=0,\ldots,N$.
\end{proposition}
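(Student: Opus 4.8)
The plan is to mimic the proof of Corollary \ref{convergence:Euler:1:2}: given a discrete trajectory $\{y_n\}_n$ of the split scheme $\Psi$, construct a piecewise-linear interpolant $y(\cdot)$ whose derivative lies within distance $\Gamma(|\fath|_\infty,t)$ of $F(t,y(t))$, and then invoke the generalized Filippov Theorem \ref{continuous:filippov} with the ROSL constant $l_f(t)+L_M(t)$ to produce a genuine solution $x(\cdot)$ of \eqref{ODI} with $x(0)=x_0$ satisfying \eqref{local:10}.

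Concretely, fix a trajectory $\{y_n\}_n$ of $\Psi$. On each interval $[t_n,t_{n+1}]$ we have $y_{n+1}=z_n+h_n m_n$, where $z_n$ solves $z_n=y_n+h_nf(t_{n+1},z_n)$ and $m_n\in M(t_n,y_n)$. I would set
\[
y(t):=y_n+(t-t_n)f(t_{n+1},z_n)+(t-t_n)m_n,\qquad t\in[t_n,t_{n+1}],
\]
so that $y(t_n)=y_n$, $y(t_{n+1})=y_{n+1}$, and $\dot y(t)=f(t_{n+1},z_n)+m_n$ on $(t_n,t_{n+1})$. Note $z_n\in\Phi$-type analysis already gives $z_n=y(t_{n+1})-h_nm_n$; more importantly, $|z_n-y_n|\le h_n(\|f\|_\infty+\|M\|_\infty)\le Ph_n$ via \eqref{direct:2} and boundedness, and similarly $|y(t)-y_n|\le Ph_n$ for $t\in[t_n,t_{n+1}]$. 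Then
\[
g(t):=\dist(\dot y(t),F(t,y(t)))=\dist(f(t_{n+1},z_n)+m_n,\ f(t,y(t))+M(t,y(t))),
\]
which I bound by $|f(t_{n+1},z_n)-f(t,y(t))|+\dist(M(t_n,y_n),M(t,y(t)))$. The first term is at most $\tau_f(h_n)+\chi_f(Ph_n)$ once one absorbs $|z_n-y(t)|\le Ph_n$ into the spatial modulus; the second is at most $\tau_M(h_n)+L_M(t)Ph_n$. Hence $g(t)\le\Gamma(h_n,t)\le\Gamma(|\fath|_\infty,t)$ for every $t$.

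With this defect bound in hand, Theorem \ref{continuous:filippov} applied successively on the subintervals (using $l_f(t)+L_M(t)$ as the ROSL constant of $x\mapsto F(t,x)$, and $x(0)=y(0)=x_0$) yields a solution $x(\cdot)$ of \eqref{ODI} with
\[
|x(t_n)-y_n|=|x(t_n)-y(t_n)|\le\int_0^{t_n}e^{\int_t^{t_n}l_f(s)+L_M(s)\,ds}\,g(t)\,dt\le\int_0^{t_n}e^{\int_t^{t_n}l_f(s)+L_M(s)\,ds}\,\Gamma(|\fath|_\infty,t)\,dt,
\]
which is exactly \eqref{local:10}. The piecewise application is legitimate because the Filippov estimate composes across subintervals just as in Corollary \ref{convergence:Euler:1:2}. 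The only mild obstacle is bookkeeping the argument-shift $z_n$ versus $y(t)$ inside $f$: one must confirm that $|z_n-y(t)|\le Ph_n$ (so it is swallowed by $\chi_f(Ph_n)$) rather than carrying a separate term, but this follows directly from \eqref{direct:2}, the uniform bound $C$ of Lemma \ref{boundedness:lemma}, and the definition of $P$. Everything else is a verbatim repeat of the earlier argument, which is why the paper states the proof "proceeds along the same lines."
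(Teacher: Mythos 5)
Your proposal is correct and is essentially the paper's intended argument: the paper gives no separate proof but states that it ``proceeds along the same lines'' as Corollary \ref{convergence:Euler:1:2}, i.e.\ linear interpolation of the discrete trajectory, the defect bound $g(t)\le\Gamma(h_n,t)$, and a piecewise application of Theorem \ref{continuous:filippov} with ROSL constant $l_f(t)+L_M(t)$. Your handling of the one genuine difference --- that the interpolant's slope involves $f(t_{n+1},z_n)$ with $z_n$ the solution of \eqref{direct:2} rather than $f(t_{n+1},y_{n+1})$, so that $|z_n-y(t)|\le Ph_n$ must be checked and absorbed into $\chi_f(Ph_n)$ --- is exactly the right adjustment.
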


\begin{remark}
\item [a)] In view of Propositions \ref{convergence:Euler:2:1} and \ref{convergence:Euler:2:2},
\[\dist_H(\ES([0,T],x_0),\ES_\Psi(\Delta_{\fath},x_0)) \rightarrow 0\ \text{as}\ |\fath|_\infty\rightarrow 0.\]
The estimates allow to exploit negative ROSL-constants.
\item [b)] If $l_f(\cdot)\equiv l_f$, $L_M(\cdot)\equiv L_M$, and $h_n\equiv h$ are constant and 
$(t,x)\mapsto f(t,x)$ and $(t,x)\mapsto M(t,x)$ are $L$-Lipschitz, then estimate \eqref{local:8} simplifies to
\begin{align*}
&|x(t_n)-y_n| \\
&\le\frac{\exp((\frac{l_f}{1-l_fh}+L_M)t_n)-1}{\frac{l_f}{1-l_fh}+L_M}
\left(\tfrac{L(1+P)h}{1-l_fh}+\tfrac{1}{l_f}(\exp(l_fh)-1)+(L+L_MP)h\right).
\end{align*}
\end{remark}

A straight-forward spatial discretization of the scheme $\Psi$ is given by
\[\hat\Psi(t,x;h,\rho):=P_\rho(\Psi(t,x;h)),\]
i.e.\ the solution $z$ of \eqref{direct:2} is computed and the set $z+M(t,x)$ 
is projected to the spatial grid. The concrete implementation of this process
depends on the implementation of the mapping $M$ and is similar to that of the explicit 
Euler scheme (see \cite{Beyn:Rieger:07}). It is therefore significantly more simple than 
the implementation of the fully implicit and the parameterized semi-implicit Euler schemes.

\begin{definition}
A sequence $\{\hat y_n\}_{n=0}^N\subset\R^d$ is called a trajectory of the fully discretized
semi-implicit split scheme $\hat\Psi$ associated with \eqref{ODI}
if 
\[\hat y_{n+1} \in \hat\Psi(t_n,\hat y_n;h_n,\rho_{n+1})\ \text{for}\ n=0,\ldots,N-1,
\quad \hat y_0\in P_{\rho_0}(\{x_0\}).\]
The set of all such trajectories is denoted $\ES_{\hat\Psi}(\Delta_{\fath},x_0)$.
\end{definition}

\begin{definition}
For any $t_n\in\Delta_{\fath}$, the reachable set $\reachable_{\hat\Psi}(t_n,x_0)$ of the fully discretized 
semi-implicit split scheme is given by
\[\reachable_{\hat\Psi}(t_n,x_0)
:=\{\hat y_n\in\R^d: \{\hat y_n\}_{n=0}^N\in\ES_{\hat\Psi}(\Delta_{\fath},x_0)\}.\]
\end{definition}

\noindent It is easy to see that for any $\{y_n\}_{n=0}^N\in\ES_\Psi(\Delta_{\fath},x_0)$,
there exists some $\{\hat y_n\}_{n=0}^N\in\ES_{\hat\Psi}(\Delta_{\fath},x_0)$ satisfying
\begin{equation}
\label{local:11}
|y_n-\hat y_n| \le \frac{\sqrt{d}}{2}\sum_{k=0}^n(\prod_{j=k}^{n-1} (\frac{1}{1-l_f(t_{j+1})h_j}+L_M(t_j)h_j))\rho_k,
\end{equation}
and for every $\{\hat y_n\}_{n=0}^N\in\ES_{\hat\Psi}(\Delta_{\fath},x_0)$, there exists some
$\{y_n\}_{n=0}^N\in\ES_\Psi(\Delta_{\fath},x_0)$ such that \eqref{local:11} holds.

If $l_f(\cdot)\equiv l_f$, $L_M(\cdot)\equiv L_M$, $h$ and $\rho$ are constant, then \eqref{local:11}
can be replaced with
\[|y_n-\hat y_n|  \le \frac{\sqrt{d}}{2}\frac{\exp(\frac{(n+1)l_fh}{1-l_fh}+(n+1)L_Mh)-1}{\frac{l_f}{1-l_fh}+L_M}\frac{\rho}{h}.\]
The unusual factor $(n+1)$ originates from the projection of the initial value to the spatial grid (cp.\ 
Remark \ref{discretization:remark}).

\section{Performance}

A comparison of the explicit and implicit Euler schemes on a stiff Michaelis-Menten system
was given in \cite{Beyn:Rieger:10}. As the simulations look very similar when the implicit Euler
scheme is replaced with one of the two semi-implicit schemes under discussion, no such graphics
are presented here. Instead, the performance of both fully discretized semi-implicit schemes is 
investigated when applied to the Dahlquist-like test inclusion
\begin{equation}
\label{Dahlquist}
\dot x(t) \in -x(t) + [-1,1],\quad x(0)=x_0\in\R,
\end{equation}
which seems to be an appropriate setting for testing implicit schemes.
Both methods are so much faster than the fully implicit Euler scheme from \cite{Beyn:Rieger:10},
which is in addition dangerously sensitive to ill-estmated constants, that a detailed comparison 
with this method is inadequate. 

The results are displayed in Figure \ref{aufwand}. The convergence of both methods $\hat \Phi$ and $\hat \Psi$
measured in terms of the errors
\begin{align*}
\max_{n\in\{0,\ldots,N_h\}}\dist_H(\reachable(t_n,5),\reachable_{\hat \Phi}(t_n,5)),\\
\max_{n\in\{0,\ldots,N_h\}}\dist_H(\reachable(t_n,5),\reachable_{\hat \Psi}(t_n,5))
\end{align*}
is linear in the constant step-size $|\fath|_\infty=5/N_h$, but not in the consumed time, 
which is typical for numerical
methods for differential inclusions (cp.\ \cite{Beyn:Rieger:07}). 
Due to its favorable analytical properties, the parameterized scheme is better when the
numerical errors are compared to the overall step-size. When performance is measured in 
computation time, however, the split scheme is far more efficient because of its simple
spatial discretization. The reasoning below shows that this performance gap will grow 
dramatically with the dimension of the state space.

\medskip

Assume that the step-size $h$ and the grid-width $\rho$ are constant and that $\epsilon=h$ is
fixed. The computational costs caused by one
time step of the parameterized and the split schemes can be roughly
expressed as
\begin{align*}
\text{time}_\text{par} &\approx C_\text{scan}\tfrac{\text{vol}(\text{domain})}{\rho^d}
+ (C_\text{Newton}(d)+C_\text{eval})\tfrac{\text{vol}(\text{image}(F))}{h^d}\tfrac{\text{vol}(\text{curr.\ state})}{\rho^d}\\
\text{time}_\text{split} &\approx C_\text{scan}\tfrac{\text{vol}(\text{domain})}{\rho^d}
+ \left(C_\text{Newton}(d) + C_\text{eval}\tfrac{\text{vol}(\text{image}(F))}{h^d}\right)
\tfrac{\text{vol}(\text{curr.\ state})}{\rho^d}
\end{align*}
with notation
\begin{itemize}
\item $C_\text{scan}$ -- time needed to check whether some grid point is an element of the current state 
\item $C_\text{Newton}(d)$ -- time needed to compute an approximate solution using Newton's method
(depends on space dimension $d$)
\item $C_\text{eval}$ -- time needed to evaluate and project the final result of one individual computation 
to the spatial grid.
\item domain -- the domain in $\R^d$ on which the algorithm computes the solution sets
\item curr.\ state -- the reachable set at present time.
\end{itemize}
Since $\rho=h^2$ and $C_\text{scan}<C_\text{eval}\ll C_\text{Newton}(d)$, it is evident that the consumed time
grows exponentially in $d$ and that
\[\text{time}_\text{par} \approx \frac{\text{vol}(\text{image}(F))}{h^d}\text{time}_\text{split}.\]
This rule of thumb is verified by Figure \ref{aufwand}.
As the error estimates for both schemes are linear in $|\fath|_\infty$, the parameterized semi-implicit Euler
scheme cannot compete with the split scheme. It has, however, one advantage that is illustrated 
in the subsequent analysis of the Dahlquist-like inclusion \eqref{Dahlquist}.

\begin{figure}[h]
\begin{center}
\includegraphics[scale=0.6]{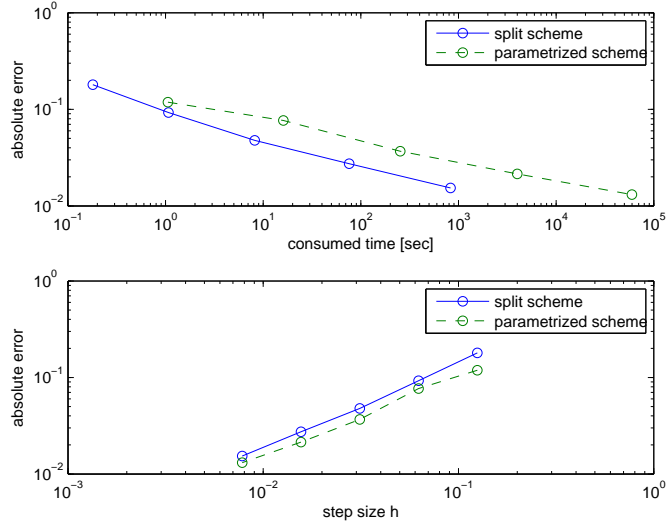}
\end{center}
\caption{Error analysis of both methods tested on the Dahlquist-like equation \eqref{Dahlquist}
with $x_0=5$ on the time interval $[0,5]$.
The convergence of both methods is linear in the constant step-size $h$, but not in the consumed time. \label{aufwand}}
\end{figure}

By monotonicity, inclusion \eqref{Dahlquist} admits upper and lower solutions 
\[x^{(+)}(t)=e^{-t}(x_0-1)+1, \quad x^{(-)}(t)=e^{-t}(x_0+1)-1,\]
and it is easy to see that the interval $[-1,1]$ is the global attractor for the 
multivalued flow induced by \eqref{Dahlquist}.
The parameterized semi-implicit Euler scheme has upper and lower solutions given by the recursions
\[y^{(+)}_{n+1} = \frac{y^{(+)}_n+h}{1+h}, \quad y^{(-)}_{n+1} = \frac{y^{(-)}_n-h}{1+h},\]
and its attractor coincides with that of the original inclusion.
The split scheme admits upper and lower solutions given by the recursions
\[\tilde y^{(+)}_{n+1} = \frac{\tilde y^{(+)}_n}{1+h}+h, \quad \tilde y^{(-)}_{n+1} = \frac{\tilde y^{(-)}_n}{1+h}-h,\]
and simple computations show that its attractor is $[-1-h,1+h]$. As a consequence, the parameterized 
semi-implicit Euler scheme seems to be superior with respect to correct asymptotic  behavior (in time).

\section{Conclusion}

The semi-implicit split scheme is at present the fastest numerical method applicable to stiff 
differential inclusions. 
In some sense, this paper finishes the discussion of explicit and implicit first-order methods 
of classical type, because combined with \cite{Beyn:Rieger:07} it provides a fairly clear picture
of what can be achieved with such schemes. 
Grid-based methods are currently the best we have, and they 
are far better than trajectory-based schemes, because they reduce the complexity in every step 
by projecting to the spatial grid and thus identifying many individual solutions. 

Nevertheless, the performance of these methods leaves much to be desired.
This is partly due to the fact that they are only approximations of first order, 
but it is mainly because of redundant computations that arise from the overlap of computed images 
and account for an overwhelming majority of the computational costs.

Two worthwile future challenges are therefore clearly defined: It is necessary to obtain a better
understanding of the fine structure of the solution set of the differential inclusion \eqref{ODI}
in order to be able to develop higher order schemes (of classical type) systematically.
Moreover, the invention of non-classical schemes that avoid redundant computations must be pushed 
forward. Numerical methods which discretize and track only the boundary of the reachable sets 
have been tested experimentally with very promising results. The dynamics of the boundary,
however, are complicated, and for that reason it is very difficult to prove error estimates
for this type of schemes.

\appendix

\section{Solvability and stability theorems}

The following solvability theorem is a modification of Corollary 3 in \cite{Beyn:Rieger:10}.
Every continuous set-valued mapping is upper semicontinuous.

\begin{theorem}
\label{solvability:theorem}
Let $G:\R^d\rightarrow\CoCpSets(\R^d)$ be upper semicontinuous and $l$-ROSL with $l<0$.
Then for any $y\in\R^d$, the set $S_G(y):=\{z\in\R^d: y\in G(z)\}$ is nonempty and compact 
and satisfies
\begin{equation}
\label{diameter:estimate}
\diam S_G(y) \le -\frac{1}{l}\sup_{x\in S(y)}\diam G(x).
\end{equation}
Moreover, the estimates
\begin{eqnarray}
\dist(x,S_G(y)) &\le& -\frac{1}{l}\dist(y,G(x)), \label{defect:estimate}\\
\dist(S_G(y),x) &\le& -\frac{1}{l}\dist(G(x),y) \label{distance:estimate}
\end{eqnarray}
hold for arbitrary $x\in\R^d$.
\end{theorem}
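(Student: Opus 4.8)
The plan is to reduce everything to a fixed-point argument for a contractive selection and to known properties of the relaxed one-sided Lipschitz condition. Since $G$ is $l$-ROSL with $l<0$, for every $x,x'$ and every $v\in G(x)$ there is $v'\in G(x')$ with $\langle v-v',x-x'\rangle\le l|x-x'|^2$; because $l<0$, this forces $|x-x'|\le -\tfrac1l|v-v'|\le -\tfrac1l\operatorname{diam}\big(G(x)\cup G(x')\big)$ whenever $v=y=v'$, which is exactly the source of all three displayed estimates. First I would prove uniqueness-type bounds: if $y\in G(x)$ and $y\in G(x')$, apply the ROSL inequality to the pair $(x,x')$ with the element $y\in G(x)$; the matching element $v'\in G(x')$ satisfies $\langle y-v',x-x'\rangle\le l|x-x'|^2$, and adding the symmetric inequality obtained by swapping roles, together with $y\in G(x')$, yields $|x-x'|^2\le -\tfrac1l|x-x'|\,\sup_{\xi}\operatorname{diam}G(\xi)$, hence \eqref{diameter:estimate}. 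The same computation with $y\notin G(x)$ but $y'\in\Proj(y,G(x))$ and $z\in S_G(y)$ chosen appropriately gives \eqref{defect:estimate} and \eqref{distance:estimate}: pick $z_0\in S_G(y)$, apply ROSL to $(x,z_0)$ starting from the nearest point $\bar y\in\Proj(y,G(x))$ to get $v_0\in G(z_0)$ with $\langle\bar y-v_0,x-z_0\rangle\le l|x-z_0|^2$, and use $y\in G(z_0)$ to bound $|v_0-y|\le\dist(y,G(x))$ after rearranging; this controls $|x-z_0|$, and since $z_0\in S_G(y)$ was the nearest such point, it controls $\dist(x,S_G(y))$.

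The substantive part is \emph{nonemptiness}. Here I would invoke the earlier remark that every continuous (hence upper semicontinuous) map is covered, and argue as in Corollary 3 of \cite{Beyn:Rieger:10}: fix $y$, and consider the single-valued map $x\mapsto \Proj(y,G(x))$ — but $G(x)$ need not be convex-valued as a function of the \emph{output}, it is convex-valued, so $\Proj(y,G(x))$ is a singleton and this map is well-defined. The idea is to find $z$ with $\Proj(y,G(z))=\{y\}$, i.e.\ $y\in G(z)$. One standard route: define $T(x):=x+\tfrac1{|l|}(y-\Proj(y,G(x)))$ or a suitable damped analogue; the ROSL condition with $l<0$ makes $T$ a (weak) contraction on a large closed ball, and upper semicontinuity gives enough regularity to pass to a fixed point via a Schauder- or Banach-type argument on the closure of the orbit. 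Alternatively, and probably cleaner, one uses a homotopy/degree argument: the map $G_t(x):=G(x)-ty$ (or $x\mapsto G(x)$ intersected with a shrinking neighborhood of $y$) is ROSL uniformly in $t$, so the solution set stays in a fixed ball by the a priori bound above, and a topological-degree computation shows the solution set is nonempty for $t=1$.

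The main obstacle I anticipate is precisely establishing nonemptiness rigorously while only assuming upper semicontinuity (not continuity) of $G$: one must be careful that $\Proj(y,G(\cdot))$ is genuinely single-valued and upper semicontinuous, and that the fixed-point map is compact or contractive on an appropriate set. The a priori estimate \eqref{diameter:estimate}, derived first, is what tames this: it confines every approximate solution to a compact ball of radius $\le -\tfrac1l\sup_\xi\operatorname{diam}G(\xi)$ around any trial point, so a standard Kakutani/Schauder fixed-point theorem applies. Compactness of $S_G(y)$ then follows from upper semicontinuity of $G$ (the defining condition $y\in G(z)$ is closed) together with boundedness from \eqref{diameter:estimate}. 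I would close by remarking that all four displays then hold for the so-constructed nonempty compact set, which completes the proof.
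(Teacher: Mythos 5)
Your derivation of \eqref{distance:estimate} and \eqref{diameter:estimate} is correct and coincides with the paper's: for $z\in S_G(y)$ one applies the ROSL condition to the pair $(z,x)$ with $y\in G(z)$, obtains $\eta\in G(x)$ with $\langle y-\eta,z-x\rangle\le l|z-x|^2$, and Cauchy--Schwarz gives $|z-x|\le-\frac1l|y-\eta|\le-\frac1l\dist(G(x),y)$; specializing to $x\in S_G(y)$ yields \eqref{diameter:estimate}, and closedness plus this bound give compactness. Be aware, though, that the paper does not prove nonemptiness or \eqref{defect:estimate} at all --- both are imported verbatim from Corollary~3 of \cite{Beyn:Rieger:10}, whose proof rests on the implicit function theorem of \cite{Beyn:Rieger:10a}. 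You attempt to supply these two items, and both attempts have genuine gaps. For \eqref{defect:estimate}: applying ROSL to $(x,z_0)$ starting from $\bar y\in\Proj(y,G(x))$ hands you \emph{some} $v_0\in G(z_0)$, which need not equal $y$; the resulting bound $|x-z_0|\le-\frac1l|\bar y-v_0|$ therefore carries an uncontrolled term $|y-v_0|$ of size up to $\diam G(z_0)$. The asymmetry is essential: \eqref{distance:estimate} bounds \emph{every} element of $S_G(y)$ and is a static consequence of ROSL, whereas \eqref{defect:estimate} asserts the \emph{existence} of a particular nearby solution and is of the same depth as nonemptiness; it cannot be recovered by this pointwise computation.

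The nonemptiness argument also fails as sketched. A fixed point of $T(x):=x+\frac1{|l|}\bigl(y-\Proj(y,G(x))\bigr)$ would indeed satisfy $y\in G(x)$, but $T$ is not a contraction under the OSL condition with $l<0$: writing $g(x):=\Proj(y,G(x))$ and $c:=1/|l|$, one has $|T(x)-T(x')|^2=|x-x'|^2-2c\langle g(x)-g(x'),x-x'\rangle+c^2|g(x)-g(x')|^2\ge 3|x-x'|^2$ whenever $g$ is $l$-OSL, i.e.\ $T$ is \emph{expansive}. (This is precisely the phenomenon the paper is built around: explicit forward steps do not inherit contractivity from a negative one-sided Lipschitz constant; only implicit, resolvent-type steps do.) Moreover $\Proj(y,G(\cdot))$ need not inherit the OSL property from $G$, since ROSL controls only one selection, and the Kakutani fallback does not close the gap either, because $T$ maps no ball into itself in general (consider $G(x)=\{-Kx\}+B_M(0)$ with $M$ large: ROSL constrains one good element of $G(x)$, not all of them). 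A correct self-contained argument has to exploit the contractivity of the flow of $\dot z\in G(z)-y$ (or an implicit discretization thereof), as in \cite{Beyn:Rieger:10a}; otherwise one should simply cite Corollary~3 of \cite{Beyn:Rieger:10}, as the paper does.
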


\begin{proof}
As $G$ is usc, $S_G(y)$ is closed for any $y\in\R^d$. Estimate \eqref{defect:estimate}
is Corollary 3 in \cite{Beyn:Rieger:10}. If $z\in S_G(y)$, then $y\in G(z)$, and by the 
ROSL property there exists some $\eta\in G(x)$ such that
\[-|y-\eta|\cdot |z-x| \le \langle y-\eta,z-x\rangle \le l|z-x|^2.\]
Hence 
\[|z-x| \le -\frac{1}{l}|y-\eta| \le -\frac{1}{l}\dist(G(x),y),\]
which proves \eqref{distance:estimate}. If, in addition, $x\in S_G(y)$, then $y\in G(x)$ and
\eqref{diameter:estimate} follows from \eqref{distance:estimate}.
\end{proof}

The stability Theorem below is cited from \cite[Theorem 13]{Beyn:Rieger:10}
\begin{theorem}
\label{continuous:filippov}
Let $l:[0,T]\rightarrow\R$ be continuous, and $F:[0,T]\rightarrow\CoCpSets(\R^d)$ be a jointly 
continuous set-valued mapping which is $l(t)$-ROSL in the second argument. Then for any given 
$y\in C^1([0,T],\R^d)$, there exists a solution $x(\cdot)$ of the differential inclusion
\begin{equation}
\label{standard:awa}
\dot x(t)\in F(t,x(t)),\ t\in[0,T], \quad x(0)=x_0
\end{equation}
such that
\begin{equation}\label{filippov:estimate}
|x(t)-y(t)| \leq e^{\int_{0}^tl(s)ds}|x(0)-y(0)| + \int_{0}^te^{\int_s^tl(\tau)d\tau}g(s)ds
\end{equation}
for all $t\in[0,T]$, where $g\in C([0,T],\R_+)$ is defined by
\begin{equation*}
g(t):=\dist(\dot y(t),F(t,y(t))).
\end{equation*}
If $l$, $F$ and $y$ are defined on $[0,\infty)$ and have the same properties as above,
then the above statement holds on the interval $[0,\infty)$.
\end{theorem}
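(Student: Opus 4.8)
Since Theorem~\ref{continuous:filippov} is quoted from \cite{Beyn:Rieger:10}, I only sketch the strategy one would follow to prove it directly. The plan is to build explicit-Euler-type polygons on a shrinking time grid, whose velocity selections are steered towards $y$ by the ROSL inequality, and then to extract a genuine solution of \eqref{standard:awa} by a compactness-and-closure argument, with the estimate \eqref{filippov:estimate} surviving the limit.

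First I would record an a priori bound: exactly as in Lemma~\ref{boundedness:lemma}, the ROSL property together with Gronwall's lemma confines every solution of \eqref{standard:awa} and every arc constructed below to a fixed compact set $K\subset\R^d$; put $P:=\sup\{|v|:v\in F(t,x),\ t\in[0,T],\ x\in K\}<\infty$. Since $F$ is jointly continuous with convex compact values and $y\in C^1$, the map $t\mapsto p(t):=\Proj(\dot y(t),F(t,y(t)))$ is a well-defined measurable selection with $|p(t)-\dot y(t)|=g(t)$.

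The core construction is a polygon with a steered selection. Given a partition $0=s_0<\dots<s_m=T$ of mesh $\delta$, define $x^\delta$ with $x^\delta(0)=x_0$ by setting, on each $[s_k,s_{k+1}]$, $\dot x^\delta(t):=w^\delta(t)$, where $w^\delta(t)\in F(t,x^\delta(s_k))$ is a measurable selection chosen via Definition~\ref{ROSL} so that
\[
\langle p(t)-w^\delta(t),\,y(t)-x^\delta(s_k)\rangle \le l(t)\,|y(t)-x^\delta(s_k)|^2 .
\]
Existence of such a \emph{measurable} $w^\delta$ follows from applying the ROSL property pointwise together with a standard measurable-selection theorem, the set of admissible $w$ being measurably closed-valued in $t$. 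Differentiating $\tfrac12|x^\delta(t)-y(t)|^2$, inserting $p(t)$ as an intermediary, and absorbing the discretization error $|x^\delta(t)-x^\delta(s_k)|\le P\delta$ together with the moduli of continuity of $y$, $l$ and $F$, one obtains
\[
\tfrac{d}{dt}|x^\delta(t)-y(t)| \le l(t)\,|x^\delta(t)-y(t)| + g(t) + \omega(\delta),
\]
with $\omega(\delta)\to0$; Gronwall then yields \eqref{filippov:estimate} for $x^\delta$ up to an additive error tending to $0$.

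Finally I would pass to the limit. The family $\{x^\delta\}$ is equi-Lipschitz with constant $P$ and equibounded, so by the Arzel\`a--Ascoli theorem a subsequence converges uniformly to some $x\in C([0,T],\R^d)$, and $\{\dot x^\delta\}$ converges weakly-$*$ in $L^\infty$. Because $\dist(\dot x^\delta(t),F(t,x^\delta(s_k)))=0$ while $x^\delta(s_k)\to x(t)$ uniformly and $F$ has convex compact values, the standard closure (Filippov convergence) lemma gives $\dot x(t)\in F(t,x(t))$ a.e.\ with $x(0)=x_0$; letting $\delta\to0$ in the previous estimate gives \eqref{filippov:estimate}. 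For the half-line version one runs this on each $[0,n]$ and diagonalizes, noting the estimate is local in $t$. The main obstacle is the closure step combined with the measurability of the steered selection $w^\delta$: the rest is Gronwall bookkeeping, but verifying that the ROSL-guided polygons accumulate at a genuine solution of the inclusion — rather than merely at a function satisfying the distance bound — is precisely where convexity of the values of $F$ and the continuity hypotheses are indispensable.
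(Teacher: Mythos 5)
The paper does not actually prove this theorem: it is quoted verbatim from \cite{Beyn:Rieger:10} (Theorem 13), so there is no internal proof to compare against, and your sketch follows the standard route for such ROSL Filippov-type results --- Euler polygons with a ROSL-steered velocity selection, a Gronwall estimate carrying an $O(\delta)$ defect, and an Arzel\`a--Ascoli plus convexity-closure argument --- which is sound at the level of a sketch and consistent with how the cited source and Donchev--Farkhi argue. The one step I would tighten is the a priori bound: as written it is mildly circular, since the constant $P$ is taken as a supremum over a compact set $K$ whose existence is exactly what the Gronwall estimate is supposed to deliver; a first-exit-time/continuation argument (fix $K$ a posteriori from the Gronwall bound around $y([0,T])$ and show the polygon cannot leave it) closes this loop. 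Also note that dividing by $|x^\delta(t)-y(t)|$ requires the usual care on the set where this quantity vanishes, though the ROSL inequality is vacuous there and the defect term absorbs it.
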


\bibliographystyle{plain}
\bibliography{semi_implicit_Euler_ODIs}
\end{document}